\documentclass[10pt]{amsart}

\usepackage{amsmath,amssymb,amsthm}

\usepackage{graphicx}
\usepackage{pinlabel}
\usepackage{xcolor}

\usepackage[numbers,sort&compress]{natbib}
\usepackage{hyperref}

\theoremstyle{plain}
\newtheorem{theorem}{Theorem}
\newtheorem{lemma}[theorem]{Lemma}

\theoremstyle{definition}

\theoremstyle{remark}
\newtheorem{remark}[theorem]{Remark}

\title{Density of Length Ratios on Teichm\"uller Space}

\author{Nhat Minh Doan}
\address{Institute of Mathematics,
Vietnam Academy of Science and Technology,
Hanoi, Vietnam}
\email{dnminh@math.ac.vn}

\author{Hanh Vo}
\address{Department of Mathematics,
Purdue University, 
Indiana, USA}
\email{vo84@purdue.edu}

\keywords{Teichm\"uller space, hyperbolic surfaces,
simple closed geodesics, length ratios,
Dehn twists, length spectrum}

\date{}

\begin{document}

\begin{abstract}
We study length ratios of simple closed geodesics on
finite-type hyperbolic surfaces of positive genus.
We allow cusps and fix the geodesic boundary lengths. 
Let $d$ be the dimension of the corresponding
Teichm\"uller space $\mathcal T_{\mathbf L}(S)$.
We prove that for every positive vector $(r_0,\ldots,r_d)$,
the set of surfaces carrying $d+1$ distinct essential
interior simple closed geodesics with lengths proportional
to this vector is dense in $\mathcal T_{\mathbf L}(S)$. 
In particular, one-holed tori with three distinct
simple closed geodesics of equal length are dense.
The proof uses high powers of Dehn twists, asymptotics
of length differentials, and the Brouwer fixed-point theorem.
\end{abstract}

\maketitle

\section{Introduction}

Geodesic lengths exhibit both rigidity and flexibility phenomena.
On the rigidity side, the marked length spectrum determines
the metric on a closed negatively curved surface
\cite{MR1038361}.
Even the ordering of curve lengths can determine a hyperbolic
metric \cite{MR2431011,parlier2025orderingcurvessurfaces}.
On the other hand, the unmarked length spectrum does not
always determine the surface, as shown by examples of
non-isometric isospectral surfaces \cite{MR850750}.
Length coincidences are also abundant.
Randol proved that the length spectrum of every closed
hyperbolic surface has unbounded multiplicity
\cite{MR553396}.

Let $S$ be a connected oriented surface of finite type
with negative Euler characteristic. Fix the geodesic
boundary lengths $\mathbf L$, and let
$\mathcal T_{\mathbf L}(S)$ denote the corresponding
Teichm\"uller space.
For $X\in\mathcal T_{\mathbf L}(S)$, let $\ell_X(\alpha)$
denote the length of the geodesic representative of an
essential closed curve $\alpha$.
For two distinct essential interior simple closed curves
$\alpha$ and $\beta$, consider the equal-length locus
\[
E(\alpha,\beta)
=\{X\in\mathcal T_{\mathbf L}(S):
\ell_X(\alpha)=\ell_X(\beta)\}.
\]
For a one-holed torus with fixed boundary length,
McShane and Parlier proved that each $E(\alpha,\beta)$
is a simple path \cite[Theorem~1.3]{MR2431011}.
They further expressed a strong suspicion that the
locus of one-holed tori with three distinct simple
closed geodesics of equal length should be nowhere
dense \cite[page 1884]{MR2431011}.

Surprisingly, this threefold-equality locus is dense.
In fact, this is a special case of a more general
result on prescribed length ratios. 
For our main result, assume that $S=S_{g,n,b}$ has genus
$g\geq 1$, with $n$ cusps and $b$ geodesic boundary
components, where $2g-2+n+b>0$.
The real dimension of $\mathcal T_{\mathbf L}(S)$ is
$d=6g-6+2n+2b$.

\begin{theorem}\label{thm:main_intro}
For every positive vector
$(r_0,\ldots,r_d)\in\mathbb R_{>0}^{d+1}$,
the set of surfaces $Y\in\mathcal T_{\mathbf L}(S)$
admitting $d+1$ distinct essential interior simple
closed curves $\gamma_0,\ldots,\gamma_d$ satisfying
\[
\frac{\ell_Y(\gamma_0)}{r_0}
=\frac{\ell_Y(\gamma_1)}{r_1}
=\cdots
=\frac{\ell_Y(\gamma_d)}{r_d}
\]
is dense in $\mathcal T_{\mathbf L}(S)$.
\end{theorem}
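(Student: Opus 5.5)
Fix $X\in\mathcal T_{\mathbf L}(S)$ and a neighbourhood $U$ of $X$. The idea is to pick a base curve $\gamma_0$ and $d$ further curves $\gamma_i=T_{\alpha}^{N}\beta_i$ (or more generally images of $d$ fixed curves under high powers of Dehn twists), with $N$ large. On $U$ these curves have lengths dominated by $N\ell(\alpha)\, i(\alpha,\beta_i)$. The aim is to show that on a small ball $B\subset U$ around $X$, the map
\[
F_N=(\ell(\gamma_1)/r_1-\ell(\gamma_0)/r_0,\;\ldots,\;\ell(\gamma_d)/r_d-\ell(\gamma_0)/r_0)\colon B\to\mathbb R^d
\]
has a zero. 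The ratio condition is exactly $F_N=0$, and distinctness is automatic for large $N$ once the twisted curves are distinct.

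\textbf{Choice of curves.} Choose simple closed curves $\alpha_1,\ldots,\alpha_d$ whose length functions have linearly independent differentials at $X$. Such a choice exists: length functions of simple closed curves give local coordinates, e.g.\ Fenchel--Nielsen lengths together with twisted curves, and positive genus provides a non-separating curve. For each $i$ pick a curve $\beta_i$ meeting $\alpha_i$. Set $\gamma_i^{(N)}=T_{\alpha_i}^{N_i}\beta_i$, and let $\gamma_0$ be a further curve, for instance $\gamma_0=T_{\alpha_0}^{N_0}\beta_0$, with all of these built from one common scale $N$. The standard asymptotics give
\[
\ell_Y(T_\alpha^{N}\beta)=N\,i(\alpha,\beta)\,\ell_Y(\alpha)+O_Y(1),
\]
and the same expansion holds for the differentials: $\tfrac1N\,d\ell(T_\alpha^N\beta)\to i(\alpha,\beta)\,d\ell(\alpha)$ uniformly on compact sets. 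This uniform convergence of differentials, rather than just of values, is what makes the argument work.

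\textbf{Normalization.} Choose the twist exponents $N_i=\lceil c_i N\rceil$ with constants $c_i$ satisfying
\[
\frac{c_i\, i(\alpha_i,\beta_i)\,\ell_X(\alpha_i)}{r_i}=\lambda
\]
for a common $\lambda$, with $\gamma_0$ normalized the same way. Then $\tfrac1N F_N(Y)$ converges uniformly on $B$, together with its derivative, to a map $G$ with $G(X)=0$ and $dG_X$ invertible. To arrange invertibility, take $\gamma_0$ twisted about a curve $\alpha_0$ independent of the chosen $\alpha_i$ in a suitable sense. Alternatively, drop the twisting from $\gamma_0$ altogether, so that $\ell(\gamma_0)/N\to0$ and the limit becomes $G_i=c_i\,i\,\ell(\alpha_i)/r_i-c_0\,\ell(\gamma_0)/(r_0N)$. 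The limit then has differential $(c_i\,i(\alpha_i,\beta_i)\,d\ell(\alpha_i)/r_i)_i$, which is invertible by the choice of the $\alpha_i$. The rounding error from $\lceil c_iN\rceil$ is $O(1/N)$ and disappears in the limit.

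\textbf{Zero of $F_N$.} Once $\tfrac1NF_N\to G$ uniformly on a ball $B$ on which $G$ is a homeomorphism onto a neighbourhood of $0$, Brouwer's fixed point theorem applies. Concretely, consider $\Phi(Y)=Y-dG_X^{-1}(\tfrac1NF_N(Y))$ in local coordinates. By uniform $C^1$ closeness it maps a small closed ball into itself for large $N$, so it has a fixed point $Y_N\in B\subset U$, and $F_N(Y_N)=0$. Alternatively a degree argument works.

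\textbf{Main obstacle.} The hard step is the uniform $C^1$ asymptotics of the lengths of the twisted curves on a neighbourhood, controlled well enough that the $O(1)$ errors do not destroy nondegeneracy. I would try to get these from the collar or trace expansions, for example $\cosh$ formulas in Fenchel--Nielsen coordinates adapted to $\alpha_i$. A secondary issue is choosing the $\alpha_i$ so that the limiting differentials span, and ensuring that all $\gamma_j$ are distinct and essential interior curves. Both are handled by taking the $\alpha_i$ from a pants decomposition plus dual curves, and by taking $N$ large.
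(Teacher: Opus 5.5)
Your architecture matches the paper's: high Dehn-twist powers calibrated by the $r_j$, uniform convergence of a normalized ratio map, then Brouwer. However, the step that carries the real content is missing, and one of your two variants cannot work.

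\textbf{The untwisted variant fails.} If $\gamma_0$ is left untwisted, $\ell_Y(\gamma_0)$ stays bounded on $B$, while $\ell_Y(\gamma_i^{(N)})\ge N\,i(\alpha_i,\beta_i)\,\ell_Y(\alpha_i)-C\to\infty$. So $F_N$ has no zero for large $N$. Indeed $\tfrac1N F_N\to G$ with $G_i(Y)=\lambda\,\ell_Y(\alpha_i)/\ell_X(\alpha_i)$, so $G(X)=(\lambda,\ldots,\lambda)\neq0$, contrary to what you assert. The ratio equations are homogeneous, so all $d+1$ curves must grow at the same rate. The limiting system is therefore forced to be the projective one $\ell_Y(\alpha_i)/\ell_X(\alpha_i)=\ell_Y(\alpha_0)/\ell_X(\alpha_0)$.

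\textbf{The twisted variant needs an unproved nondegeneracy.} There $dG_X=\lambda\bigl(g_{\alpha_i}-g_{\alpha_0}\bigr)_{i=1}^d$ with $g_\gamma=d\log\ell(\gamma)|_X$. Independence of $d\ell(\alpha_1),\ldots,d\ell(\alpha_d)$ does not make this invertible. Writing $g_{\alpha_0}=\sum_i\mu_i g_{\alpha_i}$, the differences are independent iff $\sum_i\mu_i\neq1$. Let $v$ be the vector with $g_{\alpha_i}(v)=1$ for all $i$. A good $\alpha_0$ exists iff $v$ is \emph{not} an infinitesimal homothety, i.e.\ iff it fails that $d\ell(\gamma)(v)=\ell_X(\gamma)$ for every essential interior simple closed curve $\gamma$.

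Your phrase ``independent in a suitable sense'' hides exactly this, and it is not formal. It is the paper's Step~1. It is also where $g\ge1$ is used, not for length coordinates, which exist in every genus.

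The paper rules out such a $v$ on a one-holed torus $Q\subset S$. Applying Lemma~\ref{lemma:twist-limits} to $T_\alpha^{\pm m}\beta$ and letting $m\to\infty$ gives $\pm\dot\tau+2\dot f=c(\pm\tau+2f)$, hence $\dot f=cf$ for the seam factor $f(a,L)$. The strict inequality $a\,\partial_a f+L\,\partial_L f-f<0$ (from $\phi<\log2<\psi$) then forces $c=0$. When $\delta$ is a cusp or fixed boundary, the inequality $a\,\partial_a f-f<0$ does the same. Without an argument of this kind, your limit map is not known to be nondegenerate, and the fixed-point step has no input.

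\textbf{The $C^1$ asymptotics are not needed.} What you flag as the main obstacle can be avoided. The uniform $O(1)$ bound of Lemma~\ref{lemma:universal_collar_squeeze} gives $C^0$ convergence. Since the limit map is a homeomorphism of a compact $B$ onto a closed ball, $C^0$-closeness already suffices for Brouwer.
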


The proof combines high powers of Dehn twists with
a local coordinate argument.
Near any given surface $X$, we choose $d+1$ simple
closed curves whose logarithmic length ratios give
local coordinates.
We then twist suitable transversal curves around them,
with twist powers chosen to approximate the prescribed
ratios.
The resulting normalized length-ratio maps converge
uniformly to the local coordinate map near $X$.
The Brouwer fixed-point theorem gives a nearby surface
where the ratios hold exactly.

\subsection*{Structure of the paper}

Section \ref{section-preliminaries} establishes the
one-holed torus length formula and the estimates for
high powers of Dehn twists.
Section \ref{section-higher-dimensional} proves the
main theorem using logarithmic length covectors
and the Brouwer fixed-point theorem.
We conclude with a density result
along coordinate lines in the Teichm\"uller space
of a one-holed torus.

\subsection*{Acknowledgments}
The authors thank Hugo Parlier for his encouragement.

\section{Preliminaries}\label{section-preliminaries}

In this section, we establish the estimates needed
for the proof of the main theorem. We first derive
an exact length formula on a one-holed torus, followed
by uniform estimates for high powers of Dehn twists.
We then establish a topological criterion ensuring
the distinctness of the resulting curves and derive
asymptotic expansions for their length differentials.

\subsection{Lengths on a one-holed torus}

Let $S$ be a one-holed torus with boundary length
$L\ge0$, where $L=0$ corresponds to a cusp. 
Let $\alpha$ and $\beta$ be a pair of essential simple closed curves on $S$ intersecting exactly once, $i(\alpha, \beta) = 1$. We define the coordinates $(a, \tau)$ for any hyperbolic surface $X \in \mathcal{T}_L(S)$ as follows:
\begin{itemize}
\item $a = \ell_X(\alpha)$ is the hyperbolic length of the base curve $\alpha$.
\item $\tau\in\mathbb R$ is the length-valued
Fenchel-Nielsen twist parameter about $\alpha$,
normalized so that $\tau=0$ when the two endpoints
of the common perpendicular between the
$a$-boundaries are identified. At zero twist,
the geodesic representative of $\beta$ meets
$\alpha$ orthogonally. 
\end{itemize}

Cutting $X$ along $\alpha$ yields a pair of pants with two boundary components of length $a$ and one boundary component of length $L$. Let $h(a,L)$ be the length of the unique shortest geodesic seam connecting the two $a$-boundaries (see Figure \ref{fig:pants}). By standard hyperbolic trigonometry for right-angled hexagons  (see \cite[Theorem 2.4.1(i)]{MR2742784}), this seam length satisfies:
\[
\cosh h(a,L) = \frac{\cosh(L/2) + \cosh^2(a/2)}{\sinh^2(a/2)}
\implies
\cosh \frac{h(a,L)}{2} = \sqrt{\frac{\cosh a + \cosh(L/2)}{\cosh a - 1}}.
\]
To simplify the length formulas for twisted curves and their derivatives in Section \ref{section-higher-dimensional}, we introduce the following notation:
\begin{equation}\label{eq:f_def}
F(a,L) = \cosh \frac{h(a,L)}{2} = \sqrt{\frac{\cosh a + \cosh(L/2)}{\cosh a - 1}}, \qquad f(a,L) = \log F(a,L).
\end{equation}

\begin{figure}
    \centering
    \labellist 
    \pinlabel {$a$} [r] at 20 190
    \pinlabel {$a$} [r] at 270 210
      
    \pinlabel {$a/2$} [b] at 350 250
    \pinlabel {$a/2$} [b] at 350 170

    \pinlabel {$a/2$} [b] at 610 250
    \pinlabel {$a/2$} [b] at 610 170 

    \pinlabel {$h(a,L)$} [b] at 140 145

    \pinlabel {$h(a,L)$} [b] at 479 215

    \endlabellist
    \includegraphics[width=0.6\linewidth]{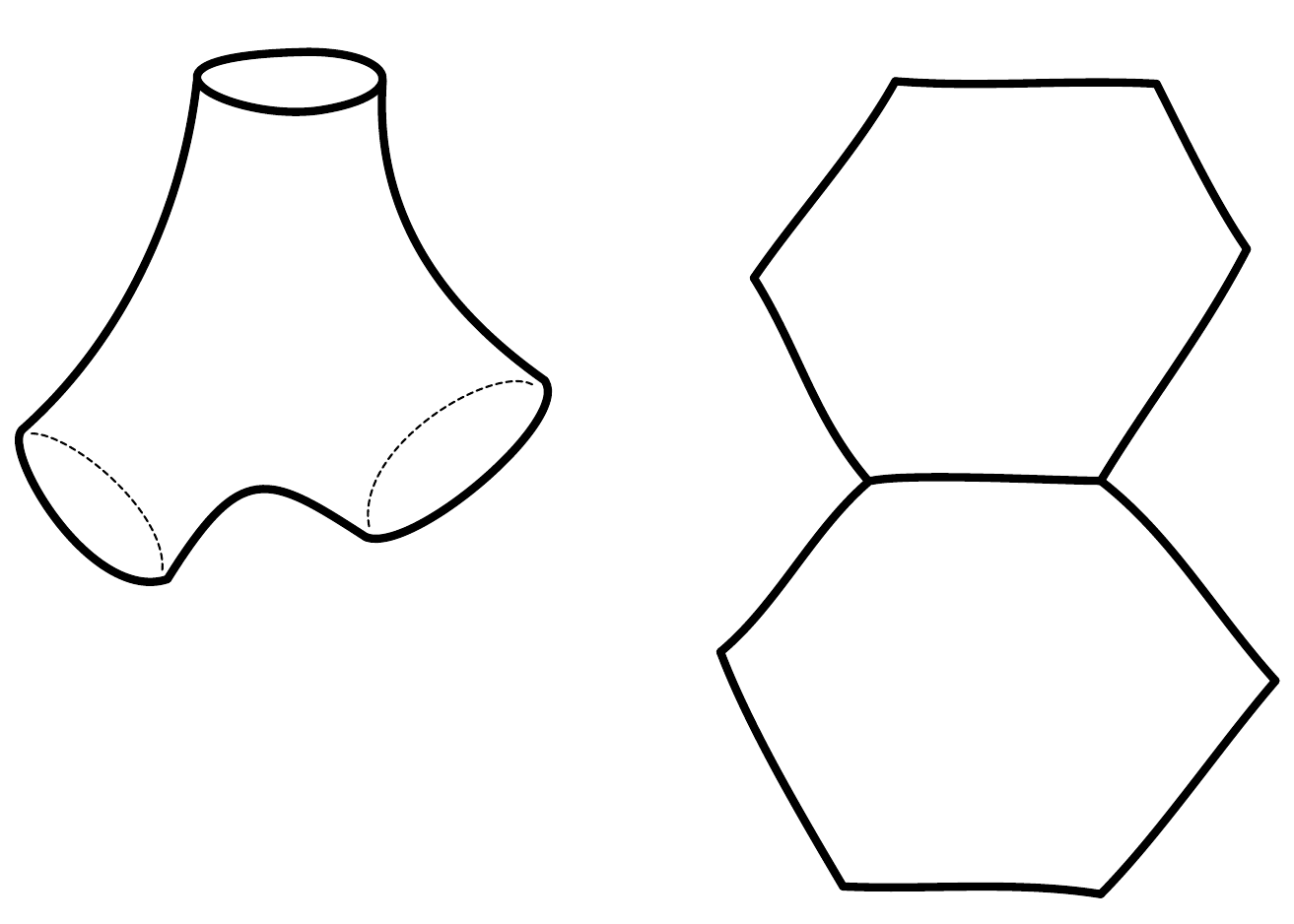}

    \caption{$h(a,L)$ is the length of the unique shortest geodesic seam connecting the two $a$-boundaries}
    \label{fig:pants}
\end{figure}

Throughout this paper, following standard conventions for length-valued Fenchel-Nielsen coordinates, applying a positive Dehn twist to a curve around $\alpha$ increases its effective twist displacement relative to the orthogonal seam by exactly $+\ell_X(\alpha)$.

\begin{lemma}\label{lemma:gluing-identity}
Let $X = (a, \tau) \in \mathcal{T}_L(S)$. The hyperbolic length of the reference curve $\beta$ on $X$ is given by:
\[
\cosh\frac{\ell_X(\beta)}{2} = F(a,L)\cosh\frac{\tau}{2}.
\]
More generally, for any integer $n \in \mathbb{Z}$, the curve $\beta_n = T_\alpha^n(\beta)$ obtained by applying $n$ Dehn twists to $\beta$ around $\alpha$ has length:
\[
\ell_X(\beta_n) = 2\operatorname{arcosh}\left( F(a,L)\cosh\frac{\tau + na}{2} \right).
\]
\end{lemma}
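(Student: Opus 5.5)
Cut $X$ along the geodesic representative of $\alpha$ to obtain the pair of pants $P$ with boundary lengths $a,a,L$, as in Figure~\ref{fig:pants}. Let $\sigma$ be the common perpendicular (seam) between the two $a$-boundaries $\alpha_1,\alpha_2$; by definition its length is $h=h(a,L)$. The plan is to compute $\ell_X(\beta)$ by lifting $\beta$ to the universal cover and reducing the computation to a single hyperbolic isometry.

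With twist parameter $\tau$, a curve freely homotopic to $\beta$ is obtained as follows. Travel along $\sigma$ from $\alpha_1$ to $\alpha_2$, then travel a signed distance $\tau$ along $\alpha$ to return to the foot of $\sigma$ on the other side. Lift this path to $\mathbb H^2$. One period of the lift of $\beta$ is then realized by the composition of two isometries:
\begin{itemize}
\item translation by $h$ along a line $\tilde\sigma$;
\item translation by $\tau$ along a line $\tilde\alpha$ that is perpendicular to $\tilde\sigma$ at its endpoint.
\end{itemize}
Concretely, the deck transformation corresponding to $\beta$ is the composition of two half-turn-type products. One clean way to see this is to write each translation as a product of reflections in lines orthogonal to its axis. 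Writing $A_t(\ell)$ for translation by $t$ along $\ell$, the deck transformation is $g=A_\tau(\tilde\alpha)\circ A_h(\tilde\sigma)$. Since the axes are orthogonal and meet at the foot point, the standard formula for the composition of translations along perpendicular axes applies. In $\mathrm{SL}_2(\mathbb R)$, take
\[
A_h=\begin{pmatrix}\cosh\frac h2&\sinh\frac h2\\ \sinh\frac h2&\cosh\frac h2\end{pmatrix}
\]
and take $A_\tau$ to be the rotation-conjugate diagonal matrix with trace $2\cosh\frac\tau2$. Computing the trace gives
\[
\tfrac12|\operatorname{tr} g| = \cosh\tfrac h2\cosh\tfrac\tau2 .
\]

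Two points need care. First, a naive reading would place the foot of $\sigma$ on $\alpha_1$ directly at the endpoint of the lift, so the two axes should meet at distance $h$ apart rather than at a common point. I will handle this by putting the $\alpha$-translation axis through the endpoint of $\tilde\sigma$ and orthogonal to it. The product then has off-diagonal contributions that cancel in the trace, by orthogonality, so only the product $\cosh\frac h2\cosh\frac\tau2$ survives. Second, one must check that $g$ is really the primitive element representing $\beta$ and not its square. This holds because $\beta$ crosses $\alpha$ exactly once, so its closed path is exactly "seam once, twist once." With both points settled, $\cosh\frac{\ell_X(\beta)}2=\tfrac12|\operatorname{tr}g|=F(a,L)\cosh\frac\tau2$ by \eqref{eq:f_def}. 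As a sanity check, at $\tau=0$ we get $\ell(\beta)=h$, which is consistent with $\beta$ meeting $\alpha$ orthogonally and with the closed geodesic being the doubled seam of length $2\cdot(h/2)$ in the hexagon picture.

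For the general statement, note that $T_\alpha^n(\beta)$ is again a curve meeting $\alpha$ once. It is represented by the same seam, but with twist displacement increased by $na$, using the convention stated just before the lemma that each positive Dehn twist adds $\ell_X(\alpha)=a$. Equivalently, the surface $(a,\tau)$ with $\beta_n$ is isometric, by a marking change, to $(a,\tau+na)$ with $\beta$. Applying the first formula with $\tau$ replaced by $\tau+na$ and inverting $\cosh$ yields the claimed expression. The main obstacle is the matrix bookkeeping in the middle step, specifically confirming the orthogonal-axis trace identity with the correct half-lengths. I would verify it directly by choosing $\tilde\sigma$ to be the imaginary axis and $\tilde\alpha$ to be the unit circle.
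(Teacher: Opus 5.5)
Your argument is correct, but it reaches the formula by a different route from the paper. You identify the deck transformation of $\beta$ explicitly as the holonomy $g=A_\tau(\widetilde\alpha_+)\circ A_h(\widetilde\sigma)$ of the loop ``seam, then $\tau$ along $\alpha$,'' and read off the length from $\tfrac12|\operatorname{tr} g|=\cosh\frac h2\cosh\frac\tau2$. The paper never writes $g$ down. It writes $\ell_X(\beta)=\min_s D(s)$ with $D(s)=d(x_-(s),x_+(s+\tau))$, and uses the half-turn $\iota$ about the midpoint $M$ of the seam to get $D(s)=D(-s-\tau)$. Convexity then puts the minimizer at $s=-\tau/2$, and the hyperbolic Pythagorean theorem is applied to a right triangle with legs $h/2$ and $|\tau|/2$. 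Your version avoids the symmetry/convexity step and the reduction of $\beta$ to a single arc across the pants. In exchange, you must check that $g$ really is that product and is primitive. The paper's version also locates the geodesic: it passes through $M$ and crosses $\alpha$ at the point $x_-(-\tau/2)\sim x_+(\tau/2)$. The two are in fact the same computation. Write $A_h=\iota_{p_+}\iota_M$ and $A_\tau=\iota_B\iota_{p_+}$ with $B=x_+(\tau/2)$, as your half-turn remark suggests. Then $g=\iota_B\iota_M$, whose translation length is $2\,d(M,B)$, twice the hypotenuse of the paper's triangle. A few minor slips do not affect validity. First, the ``naive reading'' worry is vacuous: both $\widetilde\alpha_\pm$ are perpendicular to the full line $\widetilde\sigma$, and a product of translations along perpendicular axes through a common point has trace $2\cosh\frac h2\cosh\frac\tau2$ in either order. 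Second, your displayed $A_h$ translates along the unit circle, not along the imaginary axis you name at the end; this is harmless, since the trace is symmetric. Third, at $\tau=0$ the geodesic is the seam itself, of length $h$, not a doubled seam.
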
 

\begin{proof}
Cut $X$ along $\alpha$, obtaining a hyperbolic pair of pants with
boundary lengths $a,a,L$.  Choose two boundary lifts
$\widetilde\alpha_-$ and $\widetilde\alpha_+$ in the universal cover
of this pair of pants which are joined by a lift of the orthogonal
seam.  Let $p_-$ and $p_+$ be the endpoints of this common
perpendicular, whose length is $h=h(a,L)$, and let $M$ be its
midpoint.

We first fix the signs in the arclength coordinates.  Choose a unit
tangent vector $v_-$ to $\widetilde\alpha_-$ at $p_-$, and let
$v_+$ be its parallel transport along the common perpendicular to
$p_+$.  Write $x_-(s)$ and $x_+(s)$ for the points at signed
arclength $s$ from $p_-$ and $p_+$ in the directions $v_-$ and
$v_+$, respectively.  In these compatible coordinates, the
zero-twist gluing identifies $x_-(s)$ with $x_+(s)$, and the length-valued twist $\tau$ identifies
$x_-(s)$ and $x_+(s+\tau)$. 
For $s\in\mathbb R$, let $\gamma_s$ be the geodesic segment
joining $x_-(s)$ to $x_+(s+\tau)$, and set
\[
D(s)=d_{\mathbb H^2}\bigl(x_-(s),x_+(s+\tau)\bigr).
\]
The segment $\gamma_s$ projects, after gluing, to a loop in the
free homotopy class of $\beta$. A fundamental segment of a lift of the geodesic representative of $\beta$ joins
$x_-(s)$ to $x_+(s+\tau)$ for some $s$.
Therefore,
\[
\ell_X(\beta)=\min_{s\in\mathbb R}D(s).
\] 
Let $\iota$ be the half-turn about $M$; that is,
$M$ is the midpoint of the geodesic segment joining
$P$ to $\iota(P)$ for every $P\in\mathbb H^2$.
Since $M$ is the midpoint of the common perpendicular,
$\iota$ exchanges $\widetilde{\alpha}_-$ and
$\widetilde{\alpha}_+$ and sends $p_-$ to $p_+$.
With our choice of signed arclength orientations,
\[
\iota(x_-(s))=x_+(-s),
\qquad
\iota(x_+(s))=x_-(-s).
\]
Since $\iota$ is an isometry and distance is symmetric,  
\[
\begin{aligned}
D(s)
&=d_{\mathbb H^2}\bigl(x_-(s),x_+(s+\tau)\bigr)\\
&=d_{\mathbb H^2}
 \bigl(\iota(x_-(s)),\iota(x_+(s+\tau))\bigr)\\
&=d_{\mathbb H^2}\bigl(x_+(-s),x_-(-s-\tau)\bigr)\\
&=d_{\mathbb H^2}\bigl(x_-(-s-\tau),x_+(-s)\bigr)\\
&=D(-s-\tau).
\end{aligned}
\]
Hence $D$ is symmetric about $s=-\tau/2$.  
Since $D$ is convex, its minimum occurs at
\[
s=-\frac{\tau}{2}.
\]
The endpoints of the minimizing segment are therefore
\[
A=x_-\left(-\frac\tau2\right),
\qquad
B=x_+\left(\frac\tau2\right).
\]
The half-turn sends $A$ to $B$, so $M$ is the midpoint of the
geodesic segment $AB$.  In particular,
\[
MA=\frac{\ell_X(\beta)}2.
\]

\begin{figure}
    \centering
    \labellist  
    \pinlabel {\color{red}$\beta$} [r] at 275 265
    
    \pinlabel {$M$} [r] at 275 195
      
    \pinlabel {$p_+$} [b] at 380 190

    \pinlabel {$p_-$} [b] at 120 190
    \pinlabel {$A$} [b] at 110 145
    \pinlabel {$B$} [b] at 370 260
 
    \endlabellist
    \includegraphics[width=0.35\linewidth]{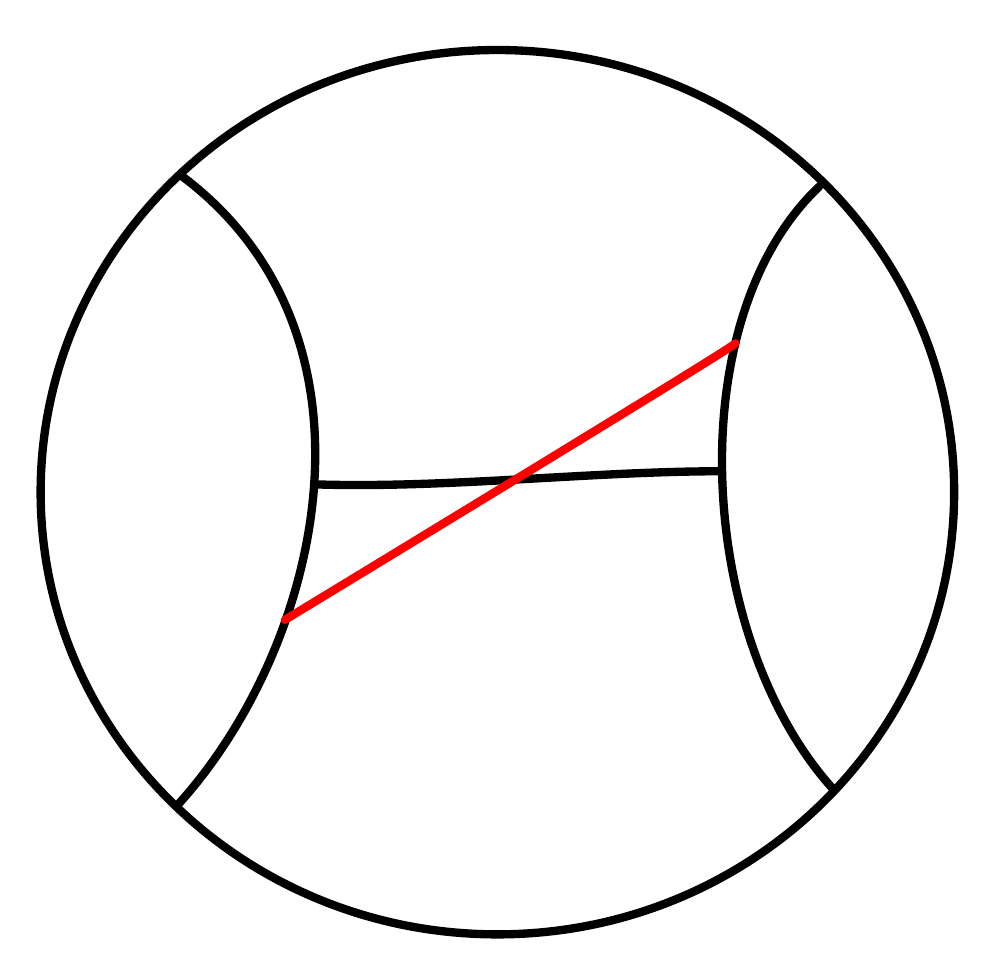}

    \caption{Computing $\ell_X(\beta)$}
    \label{fig:length-beta}
\end{figure}

The triangle with vertices $M,p_-,A$ is right-angled at $p_-$,
because the common perpendicular meets $\widetilde\alpha_-$
orthogonally.  Its two legs have lengths $h/2$ and $|\tau|/2$,
while its hypotenuse has length $\ell_X(\beta)/2$ (see Figure \ref{fig:length-beta}).  The
hyperbolic right-triangle cosine law gives
\[
\cosh\frac{\ell_X(\beta)}2
=\cosh\frac h2\cosh\frac{|\tau|}2
=F(a,L)\cosh\frac\tau2.
\]

Finally, under our twist convention, the curve
$\beta_n=T_\alpha^n(\beta)$ has effective twist displacement
$\tau+na$.  Applying the same identity with $\tau+na$ in place
of $\tau$ yields
\[
\cosh\frac{\ell_X(\beta_n)}2
=F(a,L)\cosh\frac{\tau+na}2.
\]
This completes
the proof.
\end{proof}

\subsection{Uniform estimates for high powers of Dehn twists}

Let $S$ be a finite-type surface of negative Euler characteristic that is not a pair of pants. Let $\mathcal{M}$ denote either the full Teichm\"uller space $\mathcal{T}(S)$ or a fixed-boundary subspace $\mathcal{T}_{\mathbf{L}}(S)$, where boundary lengths are fixed to a vector $\mathbf{L}$. Let $B \subset \mathcal{M}$ be a compact set.

\begin{lemma}\label{lemma:universal_collar_squeeze}
Let $\eta$ and $\xi$ be essential simple closed curves on a surface $S$ intersecting transversely with geometric intersection number $I = i(\eta, \xi) > 0$. Let $B\subset\mathcal M$ be compact. There exists a constant $C > 0$, depending only on $B$, $\eta$, and $\xi$, such that
\[
\left| \ell_Y(T_\eta^N(\xi)) - N \cdot I \cdot \ell_Y(\eta) \right| \le C
\]
for every $Y \in B$ and every integer $N \ge 1$.
\end{lemma}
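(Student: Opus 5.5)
We argue with an explicit piecewise-geodesic representative for the upper bound and a collar/crossing argument for the lower bound, with every constant made uniform over $B$ by compactness and continuity.

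\textbf{Upper bound.} Fix a reference point $X_0\in B$. Realize $\eta$ and $\xi$ as geodesics in minimal position on $X_0$; they meet in $I$ points $p_1,\dots,p_I$. The curve $T_\eta^N(\xi)$ is freely homotopic to the concatenation of the $I$ arcs of $\xi$ between consecutive intersection points with $N$ full loops around $\eta$ inserted at each $p_j$. On any $Y\in B$, transport this picture by a marking-compatible map. Replace each arc of $\xi$ by the geodesic arc in its relative homotopy class with endpoints sliding on the geodesic $\eta_Y$, and replace each inserted loop by $N$ traversals of $\eta_Y$. This produces a closed piecewise-geodesic curve in the class of $T_\eta^N(\xi)$ of length at most
\[
N I \ell_Y(\eta)+\sum_{j=1}^I \bigl(\text{arc}_j(Y)+\ell_Y(\eta)\bigr),
\]
where the extra $\ell_Y(\eta)$ per puncture absorbs the bounded sliding along $\eta$ needed to connect arc endpoints. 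The arc lengths and $\ell_Y(\eta)$ are continuous in $Y$, so they are bounded on the compact set $B$. This gives $\ell_Y(T^N_\eta(\xi))\le NI\ell_Y(\eta)+C_1$.

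\textbf{Lower bound.} By the collar lemma, $\eta_Y$ has an embedded collar of width $w(\ell_Y(\eta))>0$, and $\ell_Y(\eta)$ is bounded above and below on $B$. The geodesic representative of $T^N_\eta(\xi)$ crosses this collar exactly $I$ times, since its intersection number with $\eta$ equals $I$ and minimal position is realized by geodesics. Consider each crossing, lifted to the universal cover as a segment between the two boundary components of a lift of the collar, a strip around a lift of $\eta$. The total twisting accumulated along $\eta$ over the $I$ crossings must equal $N I$ turns up to a bounded error. This error is determined by $\xi$'s homotopy data outside the collar, and comparing with $X_0$ shows it is uniformly bounded. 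A geodesic segment crossing a collar of width $w$ while displacing by $t$ along the core has length at least $t-c(w)$: in Fermi coordinates, $\cosh(\text{length})\ge \cosh(t)\cosh(w)^{\,\cdot}$ type estimates give length $\ge t$ minus a constant. Summing over the crossings gives $\ell_Y(T^N_\eta(\xi))\ge NI\ell_Y(\eta)-C_2$, with $C_2$ uniform on $B$ by compactness.

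\textbf{Main obstacle.} The hard step is making the distribution of twisting across the $I$ crossings rigorous and uniform, since in principle the twist could be split unevenly among them. This is harmless, because the lower bound per crossing is linear in the displacement and only the total matters. I would first try to formalize it with the classical estimate relating $i(\eta,\gamma)$, twist, and length, as in Kerckhoff's or Buser's twist–length inequality. An alternative route is an inductive use of the triangle inequality in the universal cover along the axis of $\eta$. In the one-holed-torus case, Lemma \ref{lemma:gluing-identity} already yields the estimate directly with $I=1$, and this serves as a consistency check.
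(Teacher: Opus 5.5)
\textbf{The gap is in your lower bound, and it is not the one you flag.} Your per-crossing estimate is fine. It is in fact trivial: nearest-point projection to the core is $1$-Lipschitz, so a crossing with core displacement $t$ has length at least $|t|$. How the twisting is split among the $I$ crossings is also irrelevant. What is unproved is the claim that the total core displacement of the $I$ crossings equals $NI\ell_Y(\eta)$ up to an error ``determined by $\xi$'s homotopy data outside the collar''.

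To see why this fails as stated, work in the universal cover. Let $f_j^-,f_j^+$ be the feet, on the $j$-th lift $\widetilde\eta^{(j)}$, of the orthogeodesics to the neighbouring lifts. Then the segment $f_j^-f_j^+$ has signed length $b_j(Y)+N\ell_Y(\eta)$. The displacement of the $j$-th crossing of the actual geodesic differs from this by the offsets of its entry and exit points from $f_j^-$ and $f_j^+$. These offsets are not homotopy invariants. Bounding them is exactly the fellow-travelling statement that has to be proved, and it can only come from the length of the geodesic \emph{outside} the collar, which your count discards. So, as written, the lower bound is assumed rather than proved.

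\textbf{How the paper fills it.} The paper uses Slegers' stairstep lemma. For $N\ge N_0$, the broken curve $\omega_N=A_1B_{1,N}\cdots A_IB_{I,N}$ has right-angled corners of alternating orientation. Its lifts lose at most $4(2Im-1)\delta_{\mathbb H}$ over $m$ periods. The translation-length formula then gives $\ell_Y(T_\eta^N\xi)\ge NI\ell_Y(\eta)-Ic_0-8I\delta_{\mathbb H}$, uniformly on $B$ because $b_j(Y)$ is continuous. The finitely many $N<N_0$ are handled by compactness.

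\textbf{Your alternative route also works, if you keep the outside arcs.} Each $\widetilde\eta^{(j)}$ separates $\widetilde\eta^{(j-1)}$ from $\widetilde\eta^{(j+1)}$, so the axis of the deck transformation crosses these lifts in order, at points $x_j$. For points on ultraparallel lines at distance $D_j=\ell_Y(A_j)>0$, use the formula $\cosh d=\cosh D\cosh s\cosh s'-\sinh s\sinh s'$, where $s,s'$ are signed distances from the feet of the common perpendicular. This gives $d(x_j,x_{j+1})\ge|x_jf_j^+|+|f_{j+1}^-x_{j+1}|-\log^+\frac{4}{\cosh D_j-1}$. Sum over one period and apply the triangle inequality on each $\widetilde\eta^{(j)}$. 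This yields $\ell_Y(T_\eta^N\xi)\ge NI\ell_Y(\eta)-\sum_j\bigl(|b_j|+\log^+\frac{4}{\cosh D_j-1}\bigr)$, which is uniform on $B$.

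\textbf{A smaller point in your upper bound.} The extra $\ell_Y(\eta)$ per crossing does not absorb the sliding. The displacement $b_j(Y)$ is a real number, not a residue mod $\ell_Y(\eta)$; for $\xi=\beta$ on a one-holed torus it is the twist $\tau$, which can be arbitrarily large. The fix is to insert $N$ copies of $\eta_Y$ directly into the $Y$-geodesic of $\xi$, as the paper does. This gives the bound $NI\ell_Y(\eta)+\ell_Y(\xi)$.
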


\begin{proof}
We first establish the bounds for a fixed hyperbolic surface
$Y\in B$, and then show that the constants can be chosen
uniformly over $B$.

\medskip
\noindent\textbf{1. Bounds for a fixed metric $Y$.} 
Fix $Y\in B$. For the upper bound, take the geodesic
representative of $\xi$ and insert $N$ turns around $\eta$
at each of its $I$ crossings. This gives
\[
\ell_Y(T_\eta^N(\xi))
\le NI\ell_Y(\eta)+\ell_Y(\xi).
\]

For the lower bound, we follow the stairstep construction
of Slegers \cite[Lemmas~2.2--2.4]{MR4444143}.
Although Slegers works with closed surfaces, the argument
also applies to surfaces with geodesic boundary or cusps:
their universal covers are isometric to convex subsets of
$\mathbb H^2$, so we may use the fixed hyperbolicity
constant $\delta_{\mathbb H}$.

Take the geodesic representatives of $\eta$ and $\xi$.
Their $I$ intersection points cut $\xi$ into arcs
$\xi_Y^1,\ldots,\xi_Y^I$. For each $j$, let $A_j(Y)$
be the shortest geodesic arc in the corresponding relative
homotopy class, with endpoints free to slide along $\eta$.
By the first variation of length, $A_j(Y)$ meets
$\eta$ orthogonally. The curve $\xi$ is freely homotopic
to a piecewise-geodesic loop
\[
\omega_0(Y)
=A_1(Y)B_{1,0}(Y)\cdots A_I(Y)B_{I,0}(Y),
\]
where each $B_{j,0}(Y)$ lies along $\eta$.
Orient the relevant lifts of $\eta$ so that a positive
Dehn twist contributes positive signed displacement.
Let $b_j(Y)$ denote the signed displacement of
$B_{j,0}(Y)$, and set
\[
c(Y)=\max_{1\le j\le I}|b_j(Y)|.
\]

After $N$ positive Dehn twists, $T_\eta^N(\xi)$ is
represented by
\[
\omega_N(Y)
=A_1(Y)B_{1,N}(Y)\cdots A_I(Y)B_{I,N}(Y),
\]
where
\[
b_{j,N}(Y)=b_j(Y)+N\ell_Y(\eta).
\]
Consequently,
\[
\begin{aligned}
\ell_Y(B_{j,N}(Y))
&=|b_{j,N}(Y)|\\
&\ge N\ell_Y(\eta)-|b_j(Y)|\\
&\ge N\ell_Y(\eta)-c(Y).
\end{aligned}
\]
For $N$ sufficiently large, each $b_{j,N}(Y)$ is
positive and greater than $\ell_Y(\eta)$.
As in \cite[Lemma~2.2]{MR4444143}, the successive
right angles have alternating orientations, so every
lift of $\omega_N(Y)$ is a stairstep path.

Choose a vertex $x$ of a lift of $\omega_N(Y)$, and
let $g$ be the corresponding deck transformation.
Concatenating $m$ successive translates gives a
stairstep path from $x$ to $g^m x$ with $2Im$
geodesic segments. By \cite[Lemma~2.4]{MR4444143},
the error is at most $4(2Im-1)\delta_{\mathbb H}$.
Hence
\[
\begin{aligned}
d(x,g^m x)
&\ge m\sum_{j=1}^I
\left(
\ell_Y(A_j(Y))+\ell_Y(B_{j,N}(Y))
\right)\\
&\qquad -4(2Im-1)\delta_{\mathbb H}.
\end{aligned}
\]
Since $g$ represents the free homotopy class of
$T_\eta^N(\xi)$, the translation-length formula gives
\[
\ell_Y(T_\eta^N(\xi))
=\lim_{m\to\infty}\frac{d(x,g^m x)}{m}.
\]
Setting $C_{\mathrm{step}}=8I\delta_{\mathbb H}$,
we obtain
\begin{align*}
\ell_Y(T_\eta^N(\xi))
&\ge \sum_{j=1}^I
\left(
\ell_Y(A_j(Y))+\ell_Y(B_{j,N}(Y))
\right)
-C_{\mathrm{step}}\\
&\ge \sum_{j=1}^I
\ell_Y(B_{j,N}(Y))-C_{\mathrm{step}}\\
&\ge \sum_{j=1}^I
\left(N\ell_Y(\eta)-c(Y)\right)
-C_{\mathrm{step}}\\
&=NI\ell_Y(\eta)-Ic(Y)-C_{\mathrm{step}}.
\end{align*}
Thus, for each fixed $Y$, the desired estimate holds
for all sufficiently large $N$.  

\medskip
\noindent\textbf{2. Uniformity over the compact set $B$.} 
The relative homotopy classes
defining the arcs $A_j(Y)$ are fixed by the marking.
Their orthogeodesic representatives and the signed
displacements $b_j(Y)$ vary continuously with $Y$.
Likewise, the lengths $\ell_Y(\eta)$ and $\ell_Y(\xi)$
are continuous. Since $B$ is compact, we may define
\[
\begin{aligned}
c_0&=\max_{Y\in B}\max_j |b_j(Y)|<\infty,\\
L_\xi&=\max_{Y\in B}\ell_Y(\xi)<\infty,\\
a_0&=\min_{Y\in B}\ell_Y(\eta)>0,\\
L_\eta&=\max_{Y\in B}\ell_Y(\eta)<\infty.
\end{aligned}
\]

Choose an integer $N_0\ge2$ such that
\[
N_0a_0>c_0+L_\eta.
\]
For every $Y\in B$, $N\ge N_0$, and every $j$,
we have
\[
\begin{aligned}
b_{j,N}(Y)
&=b_j(Y)+N\ell_Y(\eta)\\
&\ge Na_0-c_0\\
&\ge N_0a_0-c_0\\
&>L_\eta\ge\ell_Y(\eta).
\end{aligned}
\]
Thus every $B_{j,N}(Y)$ winds at least once around
$\eta$ in the positive twist direction. In particular,
the stairstep construction above applies for all
$Y\in B$ and $N\ge N_0$. Since $c(Y)\le c_0$,
the lower bound becomes
\[
\ell_Y(T_\eta^N(\xi))
\ge NI\ell_Y(\eta)-Ic_0-C_{\mathrm{step}}
\]
for all $Y\in B$ and $N\ge N_0$.

For each of the finitely many integers $1\le N<N_0$,
the function
\[
Y\longmapsto
\ell_Y(T_\eta^N(\xi))-NI\ell_Y(\eta)
\]
is continuous and hence bounded on $B$.
We can therefore choose
\[
C=\max\left\{
Ic_0+C_{\mathrm{step}},\ L_\xi,\
\max_{\substack{Y\in B\\1\le N<N_0}}
\left|\ell_Y(T_\eta^N(\xi))-NI\ell_Y(\eta)\right|
\right\}.
\]
Combining the upper and lower bounds gives
\[
\left|
\ell_Y(T_\eta^N(\xi))-NI\ell_Y(\eta)
\right|\le C
\]
for every $Y\in B$ and every integer $N\ge1$.
\end{proof}

\subsection{Intersection growth and distinctness}

We recall the following intersection estimate from
\cite[Proposition~A.1]{MR3053012}.
Although stated for closed surfaces, its proof uses
only local estimates in annular neighborhoods of
the twisting curves. Hence the same bound holds
for finite-type surfaces with cusps or geodesic
boundary, provided all curves are essential
and interior.

\begin{lemma}[{\cite[Proposition A.1]{MR3053012}}]\label{lemma:intersection_bound}
For any essential interior simple closed curves $\eta, \xi, \sigma$ on a finite-type surface and every positive integer $N$, the geometric intersection number satisfies:
\[
\left| i(T_\eta^N(\xi), \sigma) - N i(\eta,\xi)i(\eta,\sigma) \right| \leq i(\xi,\sigma).
\]
\end{lemma}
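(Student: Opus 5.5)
The plan is to follow the standard surgery-and-bigon argument (as in Farb--Margalit's treatment of intersection numbers of Dehn twists). First we set up representatives. Fix an auxiliary hyperbolic structure on the surface and take the geodesic representatives of $\eta,\xi,\sigma$. A small perturbation removes triple points, and then the three curves are pairwise in minimal position; on a finite-type surface with cusps or boundary this uses only that interior geodesics realize geometric intersection numbers. Choose a thin annular neighborhood $A$ of $\eta$. Inside $A$, the curve $\xi$ meets $A$ in $i(\eta,\xi)$ spanning arcs and $\sigma$ meets it in $i(\eta,\sigma)$ spanning arcs. Shrinking $A$, we may assume all points of $\xi\cap\sigma$ lie outside $A$.

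\textbf{Upper bound.} Build an explicit representative $\xi_N$ of $T_\eta^N(\xi)$. Replace each of the $i(\eta,\xi)$ spanning arcs of $\xi$ in $A$ by an arc that winds $N$ times around the core, and leave $\xi$ unchanged outside $A$. Inside $A$, each winding arc meets each spanning arc of $\sigma$ exactly $N$ times; we choose the windings monotone and the $\sigma$-arcs straight, so there is no avoidable crossing. Outside $A$, the intersections are exactly the original points of $\xi\cap\sigma$. Hence
\[
|\xi_N\cap\sigma| = N\,i(\eta,\xi)\,i(\eta,\sigma)+i(\xi,\sigma),
\]
which gives $i(T_\eta^N(\xi),\sigma)\le N\,i(\eta,\xi)\,i(\eta,\sigma)+i(\xi,\sigma)$.

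\textbf{Lower bound.} By the bigon criterion, $\xi_N$ and $\sigma$ can be isotoped into minimal position by removing innermost bigons one at a time. Each removal lowers the count by exactly $2$. It therefore suffices to show that at most $i(\xi,\sigma)$ bigons are ever removed, since this yields $i(T_\eta^N(\xi),\sigma)\ge N\,i(\eta,\xi)\,i(\eta,\sigma)-i(\xi,\sigma)$. The key claim is that every innermost bigon between $\xi_N$ and $\sigma$ has at least one corner at a point of the original set $\xi\cap\sigma$ lying outside $A$, and that removing the bigon consumes that point.

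To prove the claim, suppose first that both corners lie in $A$. The two sides are then a sub-arc of a winding arc and a sub-arc of a straight $\sigma$-arc. Lifting to the universal cover $\widetilde A$ (a strip), a monotone winding arc meets each straight arc at most once, so no such bigon exists. Next suppose the bigon leaves $A$. Then its boundary arcs exit $A$ and meet outside $A$, where $\xi_N=\xi$. If neither corner were an original point of $\xi\cap\sigma$, we could push the bigon back and obtain a bigon between $\xi$ and $\sigma$, or between $\eta$ and one of them. This contradicts the pairwise minimal position.

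After removing an innermost bigon, the configuration is of the same type with one fewer original crossing available. An induction on the number of remaining original crossings then bounds the number of removals by $i(\xi,\sigma)$.

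I expect the main obstacle to be this bookkeeping. One must check that bigons passing partly through $A$ and partly outside it still have an original corner, and that successive removals never create a bigon entirely inside $A$ or reuse a corner. I would handle this with the universal-cover picture, lifting to $\mathbb H^2$, where each of $\eta,\xi,\sigma$ lifts to complete geodesics. Two lifts that bound a bigon must both be arcs of lifted curves sharing two intersection points, and this is impossible for geodesics. So any bigon must use a modified (non-geodesic) segment of $\xi_N$ adjacent to an original crossing. The degenerate cases $i(\eta,\xi)=0$ or $i(\eta,\sigma)=0$ are immediate, since then $T_\eta^N(\xi)=\xi$ up to isotopy or the twist does not affect intersections with $\sigma$.
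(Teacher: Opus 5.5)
Your upper bound is correct and is the standard surgery count. The paper itself does not prove this lemma: it quotes \cite[Proposition~A.1]{MR3053012} and remarks that the argument is local to annuli, so your setup with interior geodesics matches what it has in mind. The lower bound, however, has a genuine gap, exactly where you anticipate ``bookkeeping''.

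\emph{The case analysis.} A bigon that leaves $A$ is settled only by ``pushing the bigon back'', and that is not an argument. Undoing the windings is an isotopy supported in $A$ that moves the sides and corners of the bigon. Equivalently, applying $T_\eta^{-N}$ turns the $\sigma$-side into an arc of $T_\eta^{-N}(\sigma)$, not of $\sigma$. So this produces no bigon between $\xi$ and $\sigma$, or with $\eta$. You also do not treat sides that enter and leave $A$ several times.

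\emph{The induction.} This is the more serious problem: it fails as stated. After one isotopy across a bigon, the curve no longer agrees with $\xi$ outside $A$ and is no longer a union of monotone windings inside $A$. So the configuration is not ``of the same type'', and neither your case analysis nor the strip picture applies to the next bigon.

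\emph{The universal-cover remark.} It only shows that a bigon must involve a non-geodesic piece of $\xi_N$; ``adjacent to an original crossing'' does not follow.

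What is missing is a property of the new intersection points that survives bigon removal. Orient $\xi_N$ and $\sigma$ and lift to $\mathbb H^2$. Each point of $\xi_N\cap\sigma$ then determines a pair of lifts $(\widetilde\xi_N,\widetilde\sigma)$, up to deck transformations, and a sign. The two corners of an innermost bigon lie on the same pair of lifts and have opposite signs. An isotopy across the bigon leaves every other point, its pair, and its sign unchanged, and creates no new points.

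Now check the initial $\xi_N$. Because $\xi$ and $\sigma$ are in minimal position with $\eta$, each of $\widetilde\xi_N$ and $\widetilde\sigma$ traces a geodesic edge path in the tree dual to the lifts of $\eta$. Hence the lifts of $A$ crossed by both form a single segment of that tree. Along this segment, the relative direction in which $\widetilde\xi_N$ and $\widetilde\sigma$ cross the lift of $\eta$ is constant. In each such lift of $A$, $\widetilde\sigma$ is a straight arc met at most once by the monotone winding of $\widetilde\xi_N$. Since the twist turns every arc crossing $A$ to the same side, for each fixed pair of lifts all intersection points lying in lifts of $A$ have the same sign.

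Therefore, at every stage, every innermost bigon has a corner in the original set $\xi\cap\sigma$. Hence at most $i(\xi,\sigma)$ bigons are removed, which gives the lower bound. Separately, if $\sigma$ is isotopic to $\xi$, the geodesic representatives are not transverse, and you should use a parallel copy.
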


We use this estimate to show that sufficiently
large Dehn twists about distinct base curves
produce distinct target curves.

\begin{lemma}[Distinctness of target curves]\label{lemma:distinct_targets}
Let $X$ be a hyperbolic surface. Let $\eta_0$ and $\eta_1$ be distinct essential interior simple closed curves. For $m \in \{0, 1\}$, let $\xi_m$ be an essential interior simple closed curve with $I_m = i(\eta_m, \xi_m) > 0$. Given  $r_0, r_1 > 0$, define  
\[
N_m(k) = \left\lfloor \frac{k r_m}{I_m \ell_X(\eta_m)} \right\rfloor.
\]
Then for all sufficiently large $k$, the twisted target curves $\gamma_0^{(k)} = T_{\eta_0}^{N_0(k)}(\xi_0)$ and $\gamma_1^{(k)} = T_{\eta_1}^{N_1(k)}(\xi_1)$ are distinct.
\end{lemma}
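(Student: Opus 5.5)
The idea is to tell $\gamma_0^{(k)}$ and $\gamma_1^{(k)}$ apart by their geometric intersection numbers with a suitable fixed test curve. With respect to that curve, one family will have bounded intersection and the other will have intersection growing linearly in the twist power, with both estimates coming from Lemma~\ref{lemma:intersection_bound}. Since $r_m>0$ and $I_m\ell_X(\eta_m)>0$, we have $N_m(k)\to\infty$ as $k\to\infty$, so $N_m(k)\ge 1$ for large $k$ and Lemma~\ref{lemma:intersection_bound} applies. Two basic facts will be used throughout: a Dehn twist about $\eta$ preserves intersection numbers with $\eta$ itself (as $T_\eta(\eta)=\eta$), and intersection numbers depend only on isotopy classes. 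The argument splits into two cases according to whether $\eta_0$ and $\eta_1$ intersect.

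\emph{Case 1: $i(\eta_0,\eta_1)>0$.} Take $\eta_0$ as the test curve. Then
\[
i(\gamma_0^{(k)},\eta_0)=i(T_{\eta_0}^{N_0(k)}(\xi_0),T_{\eta_0}^{N_0(k)}(\eta_0))=i(\xi_0,\eta_0)=I_0,
\]
which is independent of $k$. For the other family, Lemma~\ref{lemma:intersection_bound} with $\eta=\eta_1$, $\xi=\xi_1$, $\sigma=\eta_0$ gives
\[
i(\gamma_1^{(k)},\eta_0)\ \ge\ N_1(k)\,I_1\,i(\eta_1,\eta_0)-i(\xi_1,\eta_0)\longrightarrow\infty.
\]
Hence for large $k$ the two intersection numbers differ, and so $\gamma_0^{(k)}\neq\gamma_1^{(k)}$.

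\emph{Case 2: $i(\eta_0,\eta_1)=0$.} Here we need a test curve $\sigma$, an essential interior simple closed curve with $i(\sigma,\eta_1)=0$ and $i(\sigma,\eta_0)>0$. Given such a $\sigma$, Lemma~\ref{lemma:intersection_bound} gives two estimates. For $\gamma_1^{(k)}$ it gives $i(\gamma_1^{(k)},\sigma)\le N_1(k)I_1\cdot 0+i(\xi_1,\sigma)=i(\xi_1,\sigma)$, which is bounded. For $\gamma_0^{(k)}$ it gives $i(\gamma_0^{(k)},\sigma)\ge N_0(k)I_0\,i(\eta_0,\sigma)-i(\xi_0,\sigma)\to\infty$. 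So again the curves are distinct for large $k$.

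The only nontrivial step, and the one I expect to need care, is constructing $\sigma$ in Case 2. Cut $S$ along $\eta_1$ and let $W$ be the component of $S\setminus\eta_1$ that contains $\eta_0$. Since $\eta_0$ is essential, interior, and not isotopic to $\eta_1$, it is not isotopic into a puncture, a geodesic boundary component, or a copy of $\eta_1$ in $\partial W$. Thus $\eta_0$ is an essential non-peripheral simple closed curve in $W$. In particular $W$ is not a pair of pants, so $W$ contains a simple closed curve $\sigma$ with $i(\sigma,\eta_0)>0$. Concretely, $\sigma$ can be taken to be a curve dual to $\eta_0$ inside the one-holed torus or four-holed sphere of $W$ that contains $\eta_0$ as a pants curve. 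This $\sigma$ lies in $W$, so it is disjoint from $\eta_1$. It is essential and interior in $S$ because it intersects the essential curve $\eta_0$ essentially. With this $\sigma$ the Case 2 estimates apply, which completes the plan.
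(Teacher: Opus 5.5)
Your proposal is correct and follows essentially the same route as the paper. It uses the same case split on $i(\eta_0,\eta_1)$, the same test curves ($\eta_0$, respectively a curve $\sigma$ in the component of $S\setminus\eta_1$ containing $\eta_0$), and the same use of Lemma~\ref{lemma:intersection_bound}. The paper instead normalizes by $N_m(k)I_m$ and passes to the limiting proportionality $i(\eta_0,\sigma)=\lambda\, i(\eta_1,\sigma)$, whereas you compare a bounded sequence of intersection numbers with an unbounded one directly; your version is slightly leaner, since it needs only $N_m(k)\to\infty$ and not the convergence of $N_1(k)I_1/(N_0(k)I_0)$.
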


\begin{proof}
By applying Lemma \ref{lemma:intersection_bound} to each target curve and dividing by the scaling factor $N_m(k)I_m$, we obtain for any essential interior test curve $\sigma$:
\[
\left| \frac{i(\gamma_m^{(k)},\sigma)}{N_m(k)I_m} - i(\eta_m,\sigma) \right| \leq \frac{i(\xi_m,\sigma)}{N_m(k)I_m}.
\]
As $k \to \infty$, we have $N_m(k) \to \infty$, so the right side vanishes, giving the limit:
\[
\frac{i(\gamma_m^{(k)},\sigma)}{N_m(k)I_m} \longrightarrow i(\eta_m,\sigma).
\]
Suppose for contradiction that $\gamma_0^{(k)} = \gamma_1^{(k)}$ for infinitely many $k$. Along this subsequence, their intersection numbers with any test curve $\sigma$ are equal. Thus,
\[
\frac{i(\gamma_0^{(k)},\sigma)}{N_0(k)I_0} = \frac{i(\gamma_1^{(k)},\sigma)}{N_0(k)I_0} = \left( \frac{N_1(k)I_1}{N_0(k)I_0} \right) \frac{i(\gamma_1^{(k)},\sigma)}{N_1(k)I_1}.
\]
By the definition of $N_m(k)$, we can write
\[
N_m(k)
=\frac{kr_m}{I_m\ell_X(\eta_m)}-\delta_m(k),
\qquad 0\le\delta_m(k)<1,
\]
where $\delta_m(k)$ is the fractional
rounding error. Consequently, as $k\to\infty$,
\[
\frac{N_1(k)I_1}{N_0(k)I_0} = \frac{ \frac{k r_1}{\ell_X(\eta_1)} - \delta_1(k)I_1 }{ \frac{k r_0}{\ell_X(\eta_0)} - \delta_0(k)I_0 } \longrightarrow \frac{r_1\ell_X(\eta_0)}{r_0\ell_X(\eta_1)} =: \lambda > 0.
\]
Passing to the limit gives $i(\eta_0,\sigma) = \lambda i(\eta_1,\sigma)$ for every test curve $\sigma$. We obtain a contradiction by considering two cases:
\begin{enumerate}
\item \textbf{Case 1: $i(\eta_0,\eta_1) > 0$.} Take $\sigma = \eta_0$. The identity becomes $i(\eta_0, \eta_0) = \lambda i(\eta_1, \eta_0)$. The left side is $0$, while the right side is positive (since $\lambda > 0$), yielding a contradiction.

\item \textbf{Case 2: $i(\eta_0,\eta_1)=0$.}
Cut the surface along $\eta_1$, and let $S'$ be the component containing $\eta_0$. The curve
$\eta_0$ remains an essential, non-peripheral interior curve of $S'$. If $S'$ were a pair of
pants, every essential simple closed curve in $S'$ would be isotopic to a boundary component; in
the original surface this would make $\eta_0$ either peripheral or isotopic to $\eta_1$, contrary
to the hypotheses. Hence $S'$ has positive complexity. Therefore there exists an essential simple
closed curve $\sigma\subset S'$ with $i(\eta_0,\sigma)>0$. Since $\sigma$ is contained in the
complement of $\eta_1$, we have $i(\eta_1,\sigma)=0$, contradicting
\[
i(\eta_0,\sigma)=\lambda i(\eta_1,\sigma).
\]
\end{enumerate}
Therefore, the target curves are distinct
for all sufficiently large $k$.
\end{proof}

\subsection{Asymptotics of twist differentials}

Let $Q$ be a one-holed torus with boundary length $L\ge0$,
where $L=0$ corresponds to a cusp. Let $\alpha,\beta$ be
essential simple closed curves with $i(\alpha,\beta)=1$.
Write $a=\ell(\alpha)$, let $\tau$ be the length-valued
twist parameter about $\alpha$ relative to $\beta$, and
set $\beta_m=T_\alpha^m(\beta)$.
Recall from Lemma~\ref{lemma:gluing-identity} that
\[
\ell(\beta_m)
=2\operatorname{arcosh}\left(
F(a,L)\cosh\frac{\tau+ma}{2}
\right),
\]
where $F(a,L)$ and $f(a,L)=\log F(a,L)$ are defined
in \eqref{eq:f_def}. We record the asymptotic expansions
of these lengths and their partial derivatives.

\begin{lemma}\label{lemma:twist-limits}
As $|m|\to\infty$, writing $\operatorname{sgn}(m)$
for the sign of $m$, we have
\begin{align*}
\ell(\beta_m)
&=|m|a+\operatorname{sgn}(m)\tau
+2f(a,L)+O(e^{-|m|a}),\\
\frac{\partial\ell(\beta_m)}{\partial\tau}
&=\operatorname{sgn}(m)+O(e^{-|m|a}),\\
\frac{\partial\ell(\beta_m)}{\partial a}
&=|m|+2\frac{\partial f}{\partial a}
+O(|m|e^{-|m|a}),\\
\frac{\partial\ell(\beta_m)}{\partial L}
&=2\frac{\partial f}{\partial L}
+O(e^{-|m|a}).
\end{align*}
These expansions hold uniformly on any compact set
of parameters $(a,L,\tau)$ with $a>0$ and $L\ge0$.
The derivative with respect to $L$ is understood
for $L>0$.
\end{lemma}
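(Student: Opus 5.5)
The plan is to work from the exact formula in Lemma~\ref{lemma:gluing-identity}. Set
\[
u=\frac{\tau+ma}{2},\qquad \ell(\beta_m)=2\operatorname{arcosh}\bigl(F\cosh u\bigr).
\]
We then use the identity $\operatorname{arcosh}(x)=\log\bigl(2x\bigr)+\log\tfrac{1}{2}\bigl(1+\sqrt{1-x^{-2}}\bigr)$, together with $\cosh u=\tfrac12 e^{|u|}(1+e^{-2|u|})$. This gives
\[
\ell(\beta_m)=2|u|+2\log F+R(a,L,u),
\]
where $R$ is a smooth function of order $O(e^{-2|u|})$, since $F\ge1$ so that $x=F\cosh u\ge \cosh u$. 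For $|m|$ large and $(a,L,\tau)$ in a compact set with $a\ge a_0>0$, we have $\operatorname{sgn}(u)=\operatorname{sgn}(m)$ and $2|u|=|m|a+\operatorname{sgn}(m)\tau$. Moreover $e^{-2|u|}\le e^{|\tau|}e^{-|m|a}$, which gives the first expansion with uniform constants.

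For the derivatives I will not differentiate the error term blindly. Instead I differentiate the closed form directly:
\[
\frac{\partial\ell}{\partial u}=\frac{2F\sinh u}{\sqrt{F^2\cosh^2u-1}},\qquad
\frac{\partial\ell}{\partial F}=\frac{2\cosh u}{\sqrt{F^2\cosh^2u-1}}.
\]
Both can be expanded for large $|u|$:
\[
\frac{\partial\ell}{\partial u}=2\operatorname{sgn}(u)\bigl(1+O(e^{-2|u|})\bigr),\qquad
\frac{\partial\ell}{\partial F}=\frac{2}{F}\bigl(1+O(e^{-2|u|})\bigr).
\]
Writing $\sinh u=\operatorname{sgn}(u)\cosh u\sqrt{1-\cosh^{-2}u}$ makes these immediate, and $F\ge1$ keeps the constants uniform. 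The chain rule then finishes the computation. From $\partial u/\partial\tau=1/2$ we get $\partial_\tau\ell=\operatorname{sgn}(m)+O(e^{-|m|a})$. From $\partial u/\partial a=m/2$ and $\partial_a F=F\,\partial_a f$ we get
\[
\partial_a\ell=|m|+2\partial_a f+O(|m|e^{-|m|a}).
\]
Here the factor $|m|$ in the error comes from $m\cdot O(e^{-2|u|})$, and the term $\partial_a f$ is bounded on compacta because $a\ge a_0$. Finally, $u$ does not depend on $L$, so $\partial_L\ell=\frac{\partial\ell}{\partial F}\,F\,\partial_L f=2\partial_L f+O(e^{-|m|a})$ for $L>0$. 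Note that $\partial_L f$ is bounded on compacta even as $L\to0^+$, since $\partial_L\cosh(L/2)=\tfrac12\sinh(L/2)$.

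The main point of care is uniformity. I must check that every $O(\cdot)$ constant depends only on bounds for $F$, for $\partial f$, and for $|\tau|$, and on the lower bound $a_0$. I also need to track the threshold on $|m|$ beyond which $\operatorname{sgn}(u)=\operatorname{sgn}(m)$, namely $|m|a_0>\sup|\tau|$, and the point where $e^{-2|u|}\le e^{\sup|\tau|}e^{-|m|a}$ is used. For the finitely many smaller $|m|$ there is nothing to prove, since the statement is asymptotic in $|m|$.
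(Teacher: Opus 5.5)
Your proposal is correct and follows essentially the same route as the paper. The length expansion comes from $\operatorname{arcosh}x=\log(2x)+O(x^{-2})$ applied to the closed formula of Lemma~\ref{lemma:gluing-identity}, and the derivative expansions come from differentiating that closed formula exactly rather than the error term (the paper writes the result using the factor $(1-F^{-2}\cosh^{-2}(u_m/2))^{-1/2}$ and $\tanh(u_m/2)$, which is exactly what your chain rule through $(u,F)$ produces), with the same uniformity bookkeeping via $|m|a_0>\sup|\tau|$ and $e^{-|\tau+ma|}\le e^{\sup|\tau|}e^{-|m|a}$.
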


\begin{proof}
Put $u_m=\tau+ma$ and write $\ell_m=\ell(\beta_m)$.
Fix a compact set $K$ of parameters $(a,L,\tau)$ with
$a>0$ and $L\ge0$. All estimates below are uniform on
$K$. Since $a$ is bounded away from zero and $\tau$ is
bounded on $K$, we have $|u_m|\to\infty$ uniformly as
$|m|\to\infty$. Moreover,
\[
\cosh\frac{u_m}{2}
=\frac12 e^{|u_m|/2}(1+e^{-|u_m|}),
\qquad
\cosh^{-2}\frac{u_m}{2}=O(e^{-|u_m|}).
\]
Using
$\operatorname{arcosh}x=\log(2x)+O(x^{-2})$
as $x\to\infty$, the exact length formula gives
\begin{align*}
\ell_m
&=2\log\left(2F(a,L)\cosh\frac{u_m}{2}\right)
+O(e^{-|u_m|})\\
&=|u_m|+2f(a,L)
+2\log(1+e^{-|u_m|})
+O(e^{-|u_m|})\\
&=|u_m|+2f(a,L)+O(e^{-|u_m|}).
\end{align*}

To obtain the derivative expansions, define
\[
R_m=
\left(
1-F(a,L)^{-2}\cosh^{-2}\frac{u_m}{2}
\right)^{-1/2}.
\]
Since $F(a,L)>1$, we have
\[
R_m=1+O(e^{-|u_m|}),
\qquad
\tanh\frac{u_m}{2}
=\operatorname{sgn}(u_m)+O(e^{-|u_m|}).
\]
Differentiating the exact length formula with respect
to $\tau$, $a$, and $L$, respectively, yields
\begin{align*}
\frac{\partial\ell_m}{\partial\tau}
&=R_m\tanh\frac{u_m}{2},\\
\frac{\partial\ell_m}{\partial a}
&=R_m\left(
m\tanh\frac{u_m}{2}
+2\frac{\partial f}{\partial a}
\right),\\
\frac{\partial\ell_m}{\partial L}
&=2\frac{\partial f}{\partial L}R_m.
\end{align*}
Since the first partial derivatives of $f$ are
bounded on $K$, these identities imply
\begin{align*}
\frac{\partial\ell_m}{\partial\tau}
&=\operatorname{sgn}(u_m)+O(e^{-|u_m|}),\\
\frac{\partial\ell_m}{\partial a}
&=m\operatorname{sgn}(u_m)
+2\frac{\partial f}{\partial a}
+O(|m|e^{-|u_m|}),\\
\frac{\partial\ell_m}{\partial L}
&=2\frac{\partial f}{\partial L}
+O(e^{-|u_m|}).
\end{align*}

Finally, since $a$ is bounded away from zero and
$\tau$ is bounded on $K$, for sufficiently large
$|m|$ we have
\[
\operatorname{sgn}(u_m)=\operatorname{sgn}(m),
\qquad
|u_m|=|m|a+\operatorname{sgn}(m)\tau.
\]
Consequently,
\[
e^{-|u_m|}
=e^{-|m|a-\operatorname{sgn}(m)\tau}
=O(e^{-|m|a}),
\]
uniformly on $K$. Substituting these relations into
the length and derivative expansions above gives
the claimed formulas for both $m\to+\infty$
and $m\to-\infty$.
\end{proof}

\section{Proof of the Main Theorem}\label{section-higher-dimensional}

We now prove the higher-dimensional result for every finite-type surface of positive genus. Cusps are allowed, and geodesic boundary components are allowed when their lengths are fixed. All curves below are essential interior simple closed curves.

\begin{proof}[Proof of Theorem \ref{thm:main_intro}]
Fix $X\in\mathcal T_{\mathbf L}(S)$ and an open neighborhood $U$ of $X$. We will find a surface with the required curves inside $U$.

\subsection*{1. Choosing $d+1$ base curves.}

For every essential interior simple closed curve $\gamma$, let $\ell(\gamma) : \mathcal{T}_{\mathbf L}(S) \to \mathbb{R}_{>0}$ denote the geodesic length function. We define the covector:
$$
g_\gamma=d\log\ell(\gamma)\big|_X\in T_X^*\mathcal T_{\mathbf L}(S).
$$
We claim that the covectors $g_\gamma$ affinely
span $T_X^*\mathcal T_{\mathbf L}(S)$.
Otherwise, fixing an essential interior simple
closed curve $\eta_0$, there exists a nonzero
$v\in T_X\mathcal T_{\mathbf L}(S)$ such that
\[
\langle g_\gamma-g_{\eta_0},v\rangle=0
\]
for every essential interior simple closed curve
$\gamma$. Setting $c=\langle g_{\eta_0},v\rangle$
and choosing a smooth path $X_t$ with
$X_0=X$ and $\dot X_0=v$, we obtain
\[
\dot\ell(\gamma)
=\left.\frac{d}{dt}\right|_{t=0}
\ell_{X_t}(\gamma)
=c\ell_X(\gamma)
\]
for every such curve $\gamma$.

Choose a one-holed-torus subsurface $Q$ carrying
one handle of $S$, allowing a cusp in place of
a geodesic boundary. Let $\delta$ denote its
boundary. If $Q\ne S$, choose $Q$ so that
$\delta$ is an essential separating interior curve
and all remaining genus and ends lie in the
complementary component. If $Q=S$, then
$\delta$ is the geodesic boundary or cusp of $S$.
We first use the geometry of $Q$ to show that
$c=0$, and then use Fenchel-Nielsen
coordinates to conclude that $v=0$. 

\subsubsection*{Step 1.1. Asymptotic twist dynamics}

Choose simple curves $\alpha,\beta\subset Q$ with $i(\alpha,\beta)=1$. To justify our local coordinate calculations, choose the Fenchel-Nielsen system so that $\alpha$ and, when interior, $\delta$ belong to a pants decomposition. Put $a=\ell_X(\alpha)$, and let $\tau$ be the length-valued twist about $\alpha$, measured using $\beta$. If $\delta$ is an interior curve or a geodesic boundary component, put $L=\ell_X(\delta)$. If $\delta$ is a cusp, put $L=0$.

The intrinsic hyperbolic metric on $Q$ is
determined by $(a,\tau,L)$. If $Q\ne S$,
its geodesic boundary ensures that geodesic
representatives of non-peripheral curves carried
by $Q$ remain in $Q$. Thus their lengths depend
only on $(a,\tau,L)$. 
Consider $\beta_m=T_\alpha^m(\beta)$.
By Lemma~\ref{lemma:twist-limits}, as
$m\to+\infty$ we have
\begin{align*}
\ell(\beta_m) &= ma + \tau + 2f(a,L) + O(e^{-ma}), \\
\frac{\partial \ell(\beta_m)}{\partial \tau} &= 1 + O(e^{-ma}), \\
\frac{\partial \ell(\beta_m)}{\partial a} &= m + 2\frac{\partial f}{\partial a} + O(me^{-ma}), \\
\frac{\partial \ell(\beta_m)}{\partial L} &= 2\frac{\partial f}{\partial L} + O(e^{-ma}),
\end{align*}
where $f(a,L)$ is the logarithmic seam factor defined in Equation \eqref{eq:f_def}. 
The time derivative along the path $X_t$ at $t=0$ is
$$
\dot{\ell}(\beta_m) = \dot{a}\frac{\partial \ell(\beta_m)}{\partial a} + \dot{\tau}\frac{\partial \ell(\beta_m)}{\partial \tau} + \dot{L}\frac{\partial \ell(\beta_m)}{\partial L}.
$$
(If $L=0$, the deformation space has no boundary-length direction and $\dot{L}=0$, so this third term simply does not appear.) 
Substituting the asymptotic limits into this chain rule yields:
\begin{align*}
\dot{\ell}(\beta_m) &= \dot{a}\Big(m + 2\frac{\partial f}{\partial a} + O(me^{-ma})\Big) + \dot{\tau}\Big(1 + O(e^{-ma})\Big) + \dot{L}\Big(2\frac{\partial f}{\partial L} + O(e^{-ma})\Big) \\
&= m\dot{a} + \dot{\tau} + 2\left(\dot{a}\frac{\partial f}{\partial a} + \dot{L}\frac{\partial f}{\partial L}\right) + O(me^{-ma}) \\
&= m\dot{a} + \dot{\tau} + 2\dot{f} + O(me^{-ma}).
\end{align*}
Since
$\dot a=ca$ and $\dot\ell(\beta_{m})=c\ell(\beta_{m})$,
$$
mca + \dot{\tau} + 2\dot{f} + O(me^{-ma}) = c\ell(\beta_m).
$$
Substituting the length expansion $\ell(\beta_m) = ma + \tau + 2f + O(e^{-ma})$ into the right side yields:
$$
mca + \dot{\tau} + 2\dot{f} + O(me^{-ma}) = c\Big(ma + \tau + 2f + O(e^{-ma})\Big)
$$
which implies
$$
\dot{\tau} + 2\dot{f} + O(me^{-ma}) = c(\tau + 2f) + cO(e^{-ma}).
$$
Because $a > 0$, letting $m \to \infty$ forces all error terms to $0$, yielding the exact equation:
$$
\dot\tau+2\dot f = c(\tau+2f).
$$
 
Applying the negative-twist expansions of Lemma \ref{lemma:twist-limits} to
$\beta_{-m}$, with $m\to+\infty$, yields
$$
-\dot\tau+2\dot f = c(-\tau+2f).
$$
Therefore,
$$
4\dot{f} = 4cf \implies \dot f=cf.
$$

\subsubsection*{Step 1.2. Forcing $c=0$.} 
We distinguish two cases according to the
boundary $\delta$ of $Q$.

\emph{Case 1: $\delta$ is an essential interior curve.} 
Because $\delta$ is an interior curve, its length derivative is $\dot{L} = cL$. Recall that $\dot{a} = ca$.
We have
$$
\dot{f} = \dot{a}\frac{\partial f}{\partial a} + \dot{L}\frac{\partial f}{\partial L}
= c\left(a\frac{\partial f}{\partial a} + L\frac{\partial f}{\partial L}\right).
$$
Substituting $\dot f=cf$ gives
\[
c\left(a\frac{\partial f}{\partial a}
+L\frac{\partial f}{\partial L}-f\right)=0.
\]
It therefore suffices to show that
\[
a\frac{\partial f}{\partial a}
+L\frac{\partial f}{\partial L}-f<0.
\]
To this end, set
\[
p=\frac{a}{2},\qquad q=\frac{L}{4} 
\implies a = 2p, \quad \frac L2 = 2q.
\]
We have
\begin{align*}
f &= \log F(a,L) \\
&= \log \sqrt{\frac{\cosh a + \cosh(L/2)}{\cosh a - 1}} \\
&= \frac{1}{2}\log\left( \frac{\cosh(2p) + \cosh(2q)}{\cosh(2p) - 1} \right) \\
&= \frac{1}{2}\log\left( \frac{2\cosh(p+q)\cosh(p-q)}{2\sinh^2 p} \right) \\
&= \frac{1}{2}\Big( \log\cosh(p+q) + \log\cosh(p-q) - 2\log\sinh p \Big).
\end{align*}
Since $a = 2p$ and $L = 4q$,
$$
a\frac{\partial}{\partial a} + L\frac{\partial}{\partial L} = 2p\left(\frac{1}{2}\frac{\partial}{\partial p}\right) + 4q\left(\frac{1}{4}\frac{\partial}{\partial q}\right) = p\frac{\partial}{\partial p} + q\frac{\partial}{\partial q}.
$$
We apply this operator to our expanded formula for $f$:
\begin{align*}
p\frac{\partial f}{\partial p} + q\frac{\partial f}{\partial q}
&= \frac{1}{2}\Big( p\tanh(p+q) + p\tanh(p-q) - 2p\coth p \Big) \\
&\quad + \frac{1}{2}\Big( q\tanh(p+q) - q\tanh(p-q) \Big) \\
&= \frac{1}{2}\Big( (p+q)\tanh(p+q) + (p-q)\tanh(p-q) - 2p\coth p \Big).
\end{align*}
Therefore,
\begin{align*}
a\frac{\partial f}{\partial a} + L\frac{\partial f}{\partial L} - f
&= \frac{1}{2} \Big[ (p+q)\tanh(p+q) - \log\cosh(p+q) \Big] \\
&\quad + \frac{1}{2} \Big[ (p-q)\tanh(p-q) - \log\cosh(p-q) \Big] \\
&\quad - \Big[ p\coth p - \log\sinh p \Big].
\end{align*}
To determine its sign, define
$$
\phi(x) = x\tanh x - \log\cosh x, \qquad \psi(p) = p\coth p - \log\sinh p.
$$
Our expression simplifies into:
$$
a\frac{\partial f}{\partial a} + L\frac{\partial f}{\partial L} - f = \frac{1}{2}\phi(p+q) + \frac{1}{2}\phi(p-q) - \psi(p).
$$
We next bound \(\phi\) and \(\psi\). For $x > 0$, taking the derivative gives:
$$
\phi'(x) = \frac{x}{\cosh^2 x} > 0, \qquad \lim_{x\to\infty}\phi(x) = \log 2.
$$
Since $\phi$ is even and $\phi(0)=0$,
we conclude that $\phi(x)<\log 2$
for every $x\in\mathbb R$. 
Since $p > 0$,
$$
\psi'(p) = -\frac{p}{\sinh^2 p} < 0, \qquad \lim_{p\to\infty}\psi(p) = \log 2.
$$
Since $\psi(p)$ is strictly decreasing and bounded below by $\log 2$, we have $\psi(p) > \log 2$ for all $p > 0$. 
Consequently, 
\begin{align*}
a\frac{\partial f}{\partial a} + L\frac{\partial f}{\partial L} - f
&= \frac{1}{2}\phi(p+q) + \frac{1}{2}\phi(p-q) - \psi(p) \\
&< \frac{1}{2}(\log 2) + \frac{1}{2}(\log 2) - \log 2 \\
&= 0.
\end{align*}
Thus $c=0$.

\emph{Case 2: $\delta$ is a cusp or a fixed geodesic boundary component.} 
In this case $Q=S$ and $\dot L=0$. 
Together with $\dot a=ca$, the chain rule gives
$$
\dot f=\dot a\frac{\partial f}{\partial a}
=ca\frac{\partial f}{\partial a}.
$$
Substituting $\dot f=cf$ yields
$$
c\left(a\frac{\partial f}{\partial a}-f\right)=0.
$$
Now
$$
\frac{\partial f}{\partial a}
=
-\frac{(1+\cosh(L/2))\sinh a}
{2(\cosh a+\cosh(L/2))(\cosh a-1)}
<0.
$$
Moreover,
$$
f(a,L)>0.
$$
Therefore,
$$
a\frac{\partial f}{\partial a}-f<0,
$$
which forces $c=0$.

\subsubsection*{Step 1.3. Forcing $v=0$.}
We now prove that $v=0$. Choose a pants decomposition
$$
P=\{\alpha_1,\ldots,\alpha_q\},
\qquad
q=3g-3+n+b.
$$
Since the boundary lengths are fixed, Fenchel-Nielsen coordinates on $\mathcal T_{\mathbf L}(S)$ are
$$
(\ell(\alpha_1),\ldots,\ell(\alpha_q),
\tau_{\alpha_1},\ldots,\tau_{\alpha_q}).
$$
By Step 1.2, we have 
$$
d\ell(\alpha_i)(v) = \left. \frac{d}{dt} \right|_{t=0} \ell_{X_t}(\alpha_i) = \dot{\ell}(\alpha_i) = c \ell_X(\alpha_i) = 0
$$
for every $i$, so all length components of $v$ vanish.
Hence
$$
v=\sum_{i=1}^q c_i\frac{\partial}{\partial\tau_{\alpha_i}}.
$$  
It remains to show $c_i=0$ for every $i$. For each $\alpha_i$, we choose an essential interior simple closed curve $\beta_i$ disjoint from every $\alpha_j$ with $j\ne i$. We distinguish two cases based on the local topology around $\alpha_i$.

\emph{Case 1: The two sides of $\alpha_i$ belong to distinct adjacent pairs of pants.} 
In this case, let $\beta_i$ be the standard dual curve in the union of these two pairs of pants, intersecting $\alpha_i$ twice. Consider
$$
\beta_{i,m}=T_{\alpha_i}^m(\beta_i).
$$
Let $\theta_i(X,s)$ denote the surface obtained by applying a Fenchel-Nielsen twist to $X$ along $\alpha_i$, parameterized such that its length-valued twist coordinate is
$$
\tau_{\alpha_i}(s) = \tau_{\alpha_i}(X) + s\ell_X(\alpha_i).
$$
Consequently, by the chain rule, taking the partial derivative with respect to $\tau_{\alpha_i}$ corresponds to applying the operator:
$$
\frac{\partial}{\partial\tau_{\alpha_i}} = \frac{1}{\ell_X(\alpha_i)} \frac{d}{ds}.
$$
We define $h_i(s) = \ell_{\theta_i(X,s)}(\beta_i)$
to be the length of the transversal curve $\beta_i$
as a function of the twist parameter $s$.
By \cite[Proposition~7.3 and Corollary~7.4(2)]{MR3053012},
$h_i(s)$ is strictly convex, attains a unique minimum,
and tends to $+\infty$ as $s\to\pm\infty$.
Although stated for closed surfaces, the proof uses
only the hyperbolic geometry of the adjacent-pants
configuration and applies equally to surfaces with
cusps or geodesic boundary.
Consequently, $h_i'(s)\ne0$ for all sufficiently
large $|s|$.

By mapping-class equivariance (\cite[Remark 2, Expos\'e 7]{MR3053012}), applying $m$ Dehn twists to a curve is geometrically equivalent to adding $m$ full turns to the surface's twist parameter. Thus, the length of the twisted curve $\beta_{i,m}$ on the surface $\theta_i(X,s)$ is exactly the length of $\beta_i$ on $\theta_i(X,s+m)$:
$$
\ell_{\theta_i(X,s)}(\beta_{i,m}) = h_i(s+m).
$$
Evaluating the partial derivative at the base surface $X$ (where $s=0$) yields:
\begin{align*}
\left. \frac{\partial \ell(\beta_{i,m})}{\partial\tau_{\alpha_i}} \right|_X 
&= \frac{1}{\ell_X(\alpha_i)} \left. \frac{d}{ds} \Big( h_i(s+m) \Big) \right|_{s=0} \\
&= \frac{h_i'(m)}{\ell_X(\alpha_i)}.
\end{align*}
Since $h_i'(m)\ne0$ for all sufficiently
large $|m|$, this partial derivative is nonzero.

\emph{Case 2: The two sides of $\alpha_i$ occur on the same pair of pants.} 
Gluing the two corresponding boundary components produces a one-holed-torus subsurface. We choose $\beta_i$ in that torus with $i(\alpha_i,\beta_i)=1$. Using Lemma \ref{lemma:twist-limits}, the partial derivative $\left. \frac{\partial \ell(\beta_{i,m})}{\partial\tau_{\alpha_i}} \right|_X$ approaches $\pm 1$ as $m \to \pm\infty$, so the derivative is nonzero.

In summary, for either case, one may choose $m$ with
$$
\left. \frac{\partial \ell(\beta_{i,m})}{\partial\tau_{\alpha_i}} \right|_X \ne 0.
$$
Because $\beta_{i,m}$ is disjoint from every other pants curve $\alpha_j$, the sum over intersection points in Wolpert's cosine formula \cite{MR615620} is empty. Consequently, we have
$$
\left. \frac{\partial \ell(\beta_{i,m})}{\partial\tau_{\alpha_j}} \right|_X 
= \sum_{p \in \alpha_j \cap \beta_{i,m}} \cos \theta_p 
= 0
\qquad
(j\ne i),
$$
where $\theta_p$ denotes the intersection angle at a point $p$. 
It follows that
$$
0
=
d\ell(\beta_{i,m})(v)
=
\sum_{j=1}^q c_j \left. \frac{\partial \ell(\beta_{i,m})}{\partial\tau_{\alpha_j}} \right|_X 
=
c_i \left. \frac{\partial \ell(\beta_{i,m})}{\partial\tau_{\alpha_i}} \right|_X,
$$
and since the partial derivative is non-zero, we must have $c_i=0$.  
Since this holds for an arbitrary $i$, all twist components of $v$ also vanish. Therefore $v=0$, a contradiction.
 
Therefore the covectors $g_\gamma$ have full affine span, that is, there are $d+1$ distinct essential interior simple closed curves
$$
\eta_0,\eta_1,\ldots,\eta_d
$$
such that
$$
g_{\eta_1}-g_{\eta_0},
\ldots,
g_{\eta_d}-g_{\eta_0}
$$
are linearly independent. Define
$$
H(Y)
=
\left(
\log\frac{\ell_Y(\eta_1)\ell_X(\eta_0)}{\ell_Y(\eta_0)\ell_X(\eta_1)},
\ldots,
\log\frac{\ell_Y(\eta_d)\ell_X(\eta_0)}{\ell_Y(\eta_0)\ell_X(\eta_d)}
\right).
$$
Then $H(X)=0$ and $dH_X$ is invertible. By the inverse function theorem, there is an open neighborhood $V \subset U$ of $X$ on which $H: V \longrightarrow H(V)$ is a diffeomorphism. 
Because $H(V)$ is open and contains $H(X)=0$, we can choose $\varepsilon>0$ such that the closed Euclidean ball $\overline{D_\varepsilon}$ centered at the origin is contained in $H(V)$. Defining $B=(H|_V)^{-1}(\overline{D_\varepsilon})$, we obtain a compact neighborhood $B \subset V \subset U$ of $X$ such that the restriction
$$
H:B\longrightarrow\overline{D_\varepsilon}
$$
is a homeomorphism and $H(\partial B) = \partial D_\varepsilon$.

\subsection*{2. The target curves}

For each $j\in\{0,\ldots,d\}$, choose an essential interior simple closed curve $\xi_j$ such that the geometric intersection number is strictly positive:
$$
I_j=i(\eta_j,\xi_j)>0.
$$
For every positive integer $k$, we calibrate the twist frequency by setting
$$
N_j(k) = \left\lfloor \frac{k r_j}{I_j\ell_X(\eta_j)} \right\rfloor = \frac{k r_j}{I_j\ell_X(\eta_j)} - \delta_j(k),
$$
where $0 \le \delta_j(k) < 1$ accounts for the bounded error from the integer floor function. We assume $k$ is sufficiently large so that every $N_j(k) \ge 1$. We then define the target twisted curves as
$$
\gamma_j^{(k)} = T_{\eta_j}^{N_j(k)}(\xi_j).
$$
Applying Lemma \ref{lemma:distinct_targets} to each of the finitely many pairs $(\eta_j, \eta_m)$ yields a $k_0$ such that for all $k \ge k_0$, the target curves $\gamma_0^{(k)}, \ldots, \gamma_d^{(k)}$ are pairwise distinct.

For each $j$ and every $Y\in B$,
Lemma~\ref{lemma:universal_collar_squeeze} gives
$$
\ell_Y(\gamma_j^{(k)}) = N_j(k) \cdot I_j \cdot \ell_Y(\eta_j) + E_j(k,Y),
$$
where the error function $E_j(k,Y)$ satisfies $|E_j(k,Y)| \le C_j$ for some constant $C_j$ depending only on $B, \eta_j,$ and $\xi_j$. Substituting the expression for $N_j(k)$ yields:
\begin{align*} 
\ell_Y(\gamma_j^{(k)}) &= \left( \frac{k r_j}{I_j\ell_X(\eta_j)} - \delta_j(k) \right) I_j \ell_Y(\eta_j) + E_j(k,Y) \\
&= k r_j\frac{\ell_Y(\eta_j)}{\ell_X(\eta_j)} + \Big( E_j(k,Y) - \delta_j(k) I_j \ell_Y(\eta_j) \Big).
\end{align*}
We define the aggregate remainder function as 
$$
\mathcal{E}_j(k,Y) = E_j(k,Y) - \delta_j(k) I_j \ell_Y(\eta_j).
$$

Since $B$ is compact and $0\le\delta_j(k)<1$,
the definition of $\mathcal{E}_j(k,Y)$ and the
uniform bound on $E_j(k,Y)$ imply that
$|\mathcal{E}_j(k,Y)|$ is bounded independently
of $k$ and $Y\in B$. As there are only finitely
many indices $j$, there exists $C>0$ such that
\[
|\mathcal{E}_j(k,Y)|\le C
\]
for every $j=0,\ldots,d$, every $Y\in B$,
and all sufficiently large $k$. Thus,
\[
\ell_Y(\gamma_j^{(k)})
=kr_j\frac{\ell_Y(\eta_j)}{\ell_X(\eta_j)}
+\mathcal{E}_j(k,Y),
\]
where $\mathcal{E}_j(k,Y)/k\to 0$ uniformly
on $B$ as $k\to\infty$.

\subsection*{3. Solving the exact ratio equations.}
Define the target map $G_k : B \to \mathbb{R}^d$ by
$$
G_k(Y) = \left( \log\frac{\ell_Y(\gamma_1^{(k)})r_0}{\ell_Y(\gamma_0^{(k)})r_1}, \ldots, \log\frac{\ell_Y(\gamma_d^{(k)})r_0}{\ell_Y(\gamma_0^{(k)})r_d} \right).
$$
Using the length formula from Step 2, we obtain
\begin{align*}
\frac{\ell_Y(\gamma_j^{(k)})r_0}{\ell_Y(\gamma_0^{(k)})r_j} 
&= \frac{\left( k r_j \frac{\ell_Y(\eta_j)}{\ell_X(\eta_j)} + \mathcal{E}_j(k,Y) \right) r_0}{\left( k r_0 \frac{\ell_Y(\eta_0)}{\ell_X(\eta_0)} + \mathcal{E}_0(k,Y) \right) r_j} \\
&= \frac{r_j \frac{\ell_Y(\eta_j)}{\ell_X(\eta_j)} + \frac{\mathcal{E}_j(k,Y)}{k}}{r_0 \frac{\ell_Y(\eta_0)}{\ell_X(\eta_0)} + \frac{\mathcal{E}_0(k,Y)}{k}} \cdot \frac{r_0}{r_j}.
\end{align*}
For every $j=0,\ldots,d$, define the leading continuous term $A_j(Y)=r_j\frac{\ell_Y(\eta_j)}{\ell_X(\eta_j)}$. Because the domain $B$ is compact and length functions are strictly positive, we have $\min_{Y\in B}A_j(Y) > 0$ for every $j$.  
As established above, the scaled error terms $\mathcal{E}_j(k,Y)/k \to 0$ uniformly on $B$ as $k \to \infty$. 
Consequently, for all sufficiently large $k$,
these expressions are bounded away from zero
uniformly on $B$. By continuity of the logarithm,
$G_k$ converges uniformly to $H$ on $B$. 

By our construction, $H$ is a homeomorphism from $B$ onto the closed Euclidean ball $\overline{D_\varepsilon}$ of radius $\varepsilon$ centered at the origin. In particular, $H$ maps the boundary $\partial B$ bijectively onto the sphere $\partial D_\varepsilon$. Therefore, the Euclidean distance from the origin to any point on $H(\partial B)$ is exactly $\varepsilon$.

By the uniform convergence of $G_k$ to $H$, for all
sufficiently large $k$, we have
\[
\|G_k(Y)-H(Y)\|<\frac{\varepsilon}{2}
\]
for every $Y\in B$.

Define
\[
\widetilde G_k=G_k\circ H^{-1}:
\overline{D_\varepsilon}\longrightarrow\mathbb R^d.
\]
Then
\[
\|\widetilde G_k(z)-z\|<\frac{\varepsilon}{2}
\]
for every $z\in\overline{D_\varepsilon}$.
Consequently, the continuous map
\[
T_k(z)=z-\widetilde G_k(z)
\]
maps $\overline{D_\varepsilon}$ into itself.
By the Brouwer fixed-point theorem, there exists
$z_k\in\overline{D_\varepsilon}$ such that
$T_k(z_k)=z_k$, or equivalently,
$\widetilde G_k(z_k)=0$.

Setting $Y_k=H^{-1}(z_k)$, we obtain
$G_k(Y_k)=0$. By the definition of $G_k$,
this gives
\[
\frac{\ell_{Y_k}(\gamma_j^{(k)})}{r_j}
=
\frac{\ell_{Y_k}(\gamma_0^{(k)})}{r_0}
\]
for every $j=1,\ldots,d$.
For sufficiently large $k$, the curves
$\gamma_0^{(k)},\ldots,\gamma_d^{(k)}$
are pairwise distinct. Since $Y_k\in B\subset U$,
we have found a surface satisfying the prescribed
length ratios in the arbitrary neighborhood $U$.
This completes the proof.
\end{proof}

\begin{remark}\label{rem:boundary-cases}
Theorem~\ref{thm:main_intro} applies to positive-genus
surfaces with cusps and geodesic boundary components
of arbitrary fixed lengths. The proof uses only
essential interior simple closed curves. 
If the geodesic boundary lengths are allowed to vary,
the dimension of Teichm\"uller space increases by $b$,
introducing boundary-length directions not controlled
by our argument. This does not rule out an analogous
result for positive-genus surfaces with variable
boundary lengths. 
For genus-zero surfaces, our proof does not apply
because the one-holed-torus subsurface used in Step 1
is unavailable.  
\end{remark}

\section{Density Along Coordinate Lines}
\label{section-coordinate-lines}

We conclude with a complementary result for
the one-holed torus: prescribed length ratios
are dense along both constant-length and
constant-twist lines.

\begin{theorem}\label{thm:one-dimensional-slices}
Let $S$ be a one-holed torus with fixed boundary length
$L\ge0$, and let $(a,\tau)$ be the Fenchel-Nielsen
coordinates associated to curves $\alpha,\beta$ with
$i(\alpha,\beta)=1$.

For every $q>0$, the surfaces admitting two distinct
essential interior simple closed geodesics with length
ratio $q$ form a dense subset of each constant-length
line $a=a_0$ and each constant-twist line $\tau=\tau_0$,
where $a_0>0$ and $\tau_0\in\mathbb R$.
\end{theorem}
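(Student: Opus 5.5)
The plan is to reduce each slice statement to a one-variable intermediate value argument using the explicit formula of Lemma~\ref{lemma:gluing-identity}, together with a simpler version of the twisting construction from the proof of Theorem~\ref{thm:main_intro}. Fix a point $X_0$ on the slice and a small interval $J$ in the slice parameter around it. We look for two distinct curves whose length ratio, viewed as a continuous function on $J$, takes values on both sides of $q$. The intermediate value theorem then produces a point of $J$ where the ratio equals $q$.

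\emph{Constant-length line $a=a_0$.} The parameter along the line is $\tau$. Use the family $\beta_m=T_\alpha^m(\beta)$, whose lengths are
\[
\ell(\beta_m)=2\operatorname{arcosh}\bigl(F(a_0,L)\cosh\tfrac{\tau+ma_0}{2}\bigr).
\]
We compare $\beta_m$ with $\beta_n$. By Lemma~\ref{lemma:twist-limits}, for $m,n\to+\infty$ we have $\ell(\beta_m)=ma_0+\tau+2f+o(1)$, uniformly for $\tau$ in the compact interval $\overline J=[\tau_0-\epsilon,\tau_0+\epsilon]$. Hence
\[
R_{m,n}(\tau)=\frac{\ell(\beta_m)}{\ell(\beta_n)}=\frac{ma_0+\tau+2f}{na_0+\tau+2f}+o(1/n).
\]
If $q\ne1$, say $q>1$, we choose sequences $m,n\to\infty$ with $m/n\to q$. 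We then tune $m$ (or $n$) so that the ratio $(ma_0+\tau_0+2f)/(na_0+\tau_0+2f)$ lies within $O(1/n)$ of $q$. The $\tau$-derivative of $R_{m,n}$ is of order $(n-m)/n^2\asymp 1/n$, which is the same order as the rounding gap. So this tuning alone may not force a sign change inside a fixed $J$.

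The better route is to use a third freedom: compare $\beta_m$ with $\beta_{-n}$, whose length is $na_0-\tau+2f+o(1)$. Then
\[
R(\tau)=\frac{ma_0+\tau+2f}{na_0-\tau+2f},
\]
and the numerator increases in $\tau$ while the denominator decreases. Since $\ell(\beta_m)$ is strictly increasing in $\tau$ for large $m$ and $\ell(\beta_{-n})$ is strictly decreasing, the exact ratio is strictly monotone in $\tau$. Its total variation over $J$ is about $2\epsilon(m+n)a_0/(na_0)^2\asymp \epsilon/n$, whereas choosing integers $m$ with $m\approx qn$ gives spacing $a_0/(na_0)=1/n$ in the ratio at $\tau_0$. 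This is still comparable to the variation, so I will fix this by also choosing $n$ freely. The set $\{(ma_0+c)/(na_0+c')\}$ over pairs $(m,n)$ accumulates densely near $q$. Concretely, pick $m=\lfloor qn\rfloor+j$ and let $n$ range. The values at $\tau_0$ have the form $q+(\text{frac. part})/n+O(1/n)$. Among all $n$ we can make the offset smaller than the variation $\epsilon/n$ (up to constants) by equidistribution or simple Dirichlet-type choice of fractional parts of $qn$ when $q$ is irrational; when $q$ is rational, periodic choices of $n$ give the needed exact alignment. This arithmetic alignment is the step I expect to be the main obstacle, and I would first try to absorb it by exploiting the additional continuous shift $\tau\mapsto\tau+ka_0$ relating the families.

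\emph{Constant-twist line $\tau=\tau_0$.} Now $a$ varies in $[a_0-\epsilon,a_0+\epsilon]$, and $\ell(\beta_m)=ma+\tau_0+2f(a,L)+O(e^{-ma})$. The ratio $\ell(\beta_m)/\ell(\beta_n)$ tends to $m/n$ and is nearly constant in $a$, so instead compare $\beta_m$ with $\alpha$ itself. Then $\ell(\beta_m)/a=m+(\tau_0+2f(a,L))/a+o(1)$, and this varies with $a$ only by $O(\epsilon)$. This is also too rigid. Instead compare $\beta_m$ with $\beta'_n=T_\beta^n(\alpha)$, whose length grows like $n\,\ell(\beta)$ by Lemma~\ref{lemma:universal_collar_squeeze}. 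Here $\ell(\beta)$ depends on $a$ through $F(a,L)\cosh(\tau_0/2)$ with nonzero derivative, since $\partial f/\partial a<0$ as computed in Step~1.2. The ratio $\ell(\beta_m)/\ell(\beta'_n)\approx ma/(n\ell(\beta))$ then has the continuous leading part $(m/n)\cdot a/\ell(\beta)(a)$. The function $a/\ell(\beta)(a)$ is strictly monotone near $a_0$, so its image over $J$ is an interval of fixed positive length. Choosing $m/n$ so that $q\,n/m$ lies in the interior of that image, the leading part crosses $q$ with a definite margin, and the $O(1/n)$ errors from Lemma~\ref{lemma:universal_collar_squeeze} cannot destroy this crossing. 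This is exactly the mechanism of Step~3 of the main proof in one dimension, with the intermediate value theorem replacing Brouwer.

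This second, "two base curves" mechanism also resolves the constant-length case cleanly, so I will use it there too. Take base curves $\eta_0=\alpha$ and $\eta_1=\beta$. Then $\log(\ell(\beta)/a_0)$ is strictly monotone in $\tau$ on one side of $0$. If $\tau_0=0$ is a critical point, use instead $\eta_1=T_\alpha(\beta)$, which is minimized at $\tau=-a_0$. We form $\gamma_0=T_\alpha^{N_0}(\xi_0)$ and $\gamma_1=T_{\eta_1}^{N_1}(\xi_1)$ as in Step~2 of the main proof. The ratio converges uniformly to $(r_1/r_0)$ times the leading term, and the intermediate value theorem applies. Distinctness follows from Lemma~\ref{lemma:distinct_targets}.
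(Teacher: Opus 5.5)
Your final argument is essentially the paper's proof. On the constant-twist line, comparing $T_\alpha^m(\beta)$ with $T_\beta^n(\alpha)$ is the paper's construction with base curves $\alpha,\beta$ and specific transversals. On the constant-length line you take $\eta_0=\alpha$ and $\eta_1=\beta$, or $\eta_1=T_\alpha(\beta)$ when $\tau_0=0$, matching the paper's choice of $n$ with $\tau_0+na_0\neq0$. In both cases you then use high twists, uniform convergence of the log-ratio, and the intermediate value theorem, as the paper does.

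You should delete the earlier direct comparisons of $\beta_m$ with $\beta_{\pm n}$. The arithmetic-alignment claim there, especially for rational $q$, is not justified, and the final argument does not need it.
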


\begin{proof}
Fix a point $X=(a_0,\tau_0)$ on one of the two
coordinate lines and an arbitrary neighborhood $U$
of $X$ within that line. Parameterize a small interval
by $c(t)$ with $c(0)=X$. We first choose two distinct
base curves $\eta_0,\eta_1$ such that
\[
H(t)=
\log\frac{\ell_{c(t)}(\eta_1)\ell_X(\eta_0)}
{\ell_{c(t)}(\eta_0)\ell_X(\eta_1)}
\]
satisfies $H'(0)\ne0$.

On the constant-twist line
$c(t)=(a_0+t,\tau_0)$, take
$\eta_0=\alpha$ and $\eta_1=\beta$. Then
\[
\frac{d}{da}\log\ell(\alpha)=\frac1a>0.
\]
Moreover, by Lemma~\ref{lemma:gluing-identity},
\[
\cosh\frac{\ell(\beta)}2
=F(a,L)\cosh\frac{\tau_0}{2},
\]
and
\[
\frac{\partial}{\partial a}\log F(a,L)
=
-\frac{(1+\cosh(L/2))\sinh a}
{2(\cosh a+\cosh(L/2))(\cosh a-1)}<0.
\]
Since $\operatorname{arcosh}$ is increasing,
we have $\partial_a\ell(\beta)<0$.
Hence $H'(0)\ne0$.

On the constant-length line
$c(t)=(a_0,\tau_0+t)$, take $\eta_0=\alpha$
and choose $n\in\mathbb Z$ with
$\tau_0+na_0\ne0$. Set
$\eta_1=T_\alpha^n(\beta)$.
The length of $\eta_0$ is constant on this
line, whereas Lemma~\ref{lemma:gluing-identity}
gives
\[
\left.
\frac{\partial}{\partial\tau}
\ell(T_\alpha^n\beta)\right|_X
=
\frac{F(a_0,L)\sinh((\tau_0+na_0)/2)}
{\sqrt{F(a_0,L)^2
\cosh^2((\tau_0+na_0)/2)-1}}
\ne0.
\]
Thus again $H'(0)\ne0$. 
Since $H(0)=0$ and $H'(0)\ne0$, we can
choose $\varepsilon>0$ such that
$c([-\varepsilon,\varepsilon])\subset U$
and $H(-\varepsilon)$ and $H(\varepsilon)$
have opposite signs.

Write $q=r_1/r_0$ with $r_0,r_1>0$.
For $i=0,1$, choose an essential interior
simple closed curve $\xi_i$ with
$I_i=i(\xi_i,\eta_i)>0$, and set
\[
N_i(k)=\left\lfloor
\frac{kr_i}{I_i\ell_X(\eta_i)}
\right\rfloor,
\qquad
\gamma_i^{(k)}
=T_{\eta_i}^{N_i(k)}(\xi_i).
\]
By Lemma~\ref{lemma:distinct_targets},
the curves $\gamma_0^{(k)}$ and
$\gamma_1^{(k)}$ are distinct for all
sufficiently large $k$.
By Lemma~\ref{lemma:universal_collar_squeeze},
uniformly for $t\in[-\varepsilon,\varepsilon]$,
\[
\ell_{c(t)}(\gamma_i^{(k)})
=
kr_i\frac{\ell_{c(t)}(\eta_i)}
{\ell_X(\eta_i)}+O(1).
\]
Consequently, the maps
\[
G_k(t)=
\log\frac{\ell_{c(t)}(\gamma_1^{(k)})r_0}
{\ell_{c(t)}(\gamma_0^{(k)})r_1}
\]
converge uniformly to $H(t)$.
For all sufficiently large $k$,
$G_k(-\varepsilon)$ and $G_k(\varepsilon)$
therefore have opposite signs.
By the intermediate value theorem,
there exists $t_k\in(-\varepsilon,\varepsilon)$
such that $G_k(t_k)=0$. Hence
\[
\frac{\ell_{c(t_k)}(\gamma_1^{(k)})}
{\ell_{c(t_k)}(\gamma_0^{(k)})}
=\frac{r_1}{r_0}=q.
\]
Since $U$ was arbitrary, this proves density
on the chosen line.
\end{proof}

\subsection*{AI Use}
ChatGPT (OpenAI) assisted with checking the proofs
and proofreading the paper. The authors take full
responsibility for the mathematical content.

\bibliographystyle{plain}
\bibliography{ref}

\end{document}